\renewcommand{\a }{\alpha }
\renewcommand{\b }{\beta }
\renewcommand{\d}{\delta }
\newcommand{\D }{\Delta }
\renewcommand{\l }{\lambda }
\newcommand{\n }{\nabla }
\newcommand{\s }{\sigma }
\renewcommand{\o }{\omega }
\renewcommand{\O }{\Omega }
\newcommand{\ov}{\overline}
\def\o{\omega}
\def\p{\partial}
\newcommand{\wtilde }{\widetilde}
\newcommand{\be}{\begin{equation}}
\newcommand{\ee}{\end{equation}}
\newcommand{\R}{\mathbb{R}}
\renewcommand{\S}{\mathbb{S}}
\newcommand{\dis}{\displaystyle}
\newtheorem{theorem}{Theorem}[section]
\newtheorem{proposition}[theorem]{Proposition}
\newtheorem{definition}{Definition}[section]
\newtheorem{example}[theorem]{Example}
\newcommand{\bpr}{\begin{proposition}}
\newcommand{\epr}{\end{proposition}}
\newcommand{\bex}{\begin{example}\rm}
\newcommand{\eex}{\end{example}}
\begin{document}

\newtheorem{lem}{Lemma}[section]
\newtheorem{pro}[lem]{Proposition}
\newtheorem{thm}[lem]{Theorem}
\newtheorem{rem}[lem]{Remark}
\newtheorem{cor}[lem]{Corollary}
\newtheorem{df}[lem]{Definition}

\title[Singular Sphere Covering Inequality and Liouville-type problems]
{A singular Sphere Covering Inequality: \\ uniqueness and symmetry of solutions to \\ singular Liouville-type equations.}

\author{Daniele Bartolucci, Changfeng Gui, Aleks Jevnikar, Amir Moradifam}

\address{Daniele Bartolucci, Department of Mathematics, University of Rome {\it "Tor Vergata"},  Via della Ricerca Scientifica 1, 00133 Roma, Italy.}
\email{bartoluc@mat.uniroma2.it}

\address{Changfeng Gui,~Department of Mathematics, University of Texas at San Antonio, Texas, USA.}
\email{changfeng.gui@utsa.edu}

\address{Aleks Jevnikar,~University of Rome `Tor Vergata', Via della Ricerca Scientifica 1, 00133 Roma, Italy.}
\email{jevnikar@mat.uniroma2.it}

\address{Amir Moradifam,~Department of Mathematics, University of California, Riverside, California, USA.}
\email{moradifam@math.ucr.edu}

\thanks{D.B. and A.J. are partially supported by FIRB project "{\em Analysis and Beyond}",  by PRIN project 2012, ERC PE1\_11, "{\em Variational and perturbative aspects in nonlinear differential problems}", and by the Consolidate the Foundations project 2015 (sponsored by Univ. of Rome "Tor Vergata"),
"{\em Nonlinear Differential Problems and their Applications}". C.G. is partially supported by NSF grant DMS-1601885 and NSFC grant No 11371128, and A.M. is supported in part by NSF grant DMS-1715850.}

\keywords{Geometric PDEs, Singular Liouville-type equations, Mean field equation, Uniqueness results, Sphere Covering Inequality, Alexandrov-Bol inequality}

\subjclass[2000]{35J61, 35R01, 35A02, 35B06.}

\begin{abstract}
We derive a singular version of the Sphere Covering Inequality which was recently introduced in \cite{GM1}, 
suitable for treating singular Liouville-type problems with superharmonic weights. As an application we deduce new uniqueness 
results for solutions of the singular mean field equation both on spheres and on bounded domains, as well as new self-contained proofs of 
previously known results, such as the uniqueness of spherical convex polytopes first established in \cite{lt}. Furthermore, 
we derive new symmetry results for the spherical Onsager vortex equation. 
\end{abstract}

\maketitle

\section{Introduction}

\medskip

We are concerned with a class of elliptic equations with exponential nonlinearities, namely the following Liouville-type equation,
\begin{equation} \label{eq:liouv}
	\D u +h(x)e^u = f(x) \quad \mbox{in } \O,
\end{equation}
where $\O\subset\R^2$ is a smooth bounded domain and $h(x)$ is a positive function. 
The latter equation (and its counterpart on manifolds, see \eqref{eq:mf} below) has been widely discussed in
the last decades since it arises in several problems of mathematics and physics, 
such as Electroweak and Chern-Simons self-dual vortices \cite{sy2, T0, yang}, conformal geometry on surfaces \cite{Troy, KW, CY1, CY2}, 
statistical mechanics of two-dimensional turbulence  \cite{clmp2} and of self-gravitating systems \cite{w} and cosmic strings \cite{pot}, 
theory of hyperelliptic curves \cite{cLin14}, Painlev\'e equations \cite{CKLin} and Moser-Trudinger inequalities 
\cite{beck, dolb-est-jan, gho-lin, gui-wei, moser}. There are by now many results concerning  existence and multiplicity 
\cite{B5, BDeM, BdM2, BdMM, BJLY, BMal, bt, cama, cl2, cl4, dem2, DJLW, dj, linwang, Mal1, Mal2}, 
uniqueness \cite{bl, BLin3, BLT, GM1, GM3, Lin1, Lin7, suz}, blow-up phenomena \cite{bcct, BJLY2, bt2, bm, cl1, cl3, yy, ls, Tar14, Za2} and 
entire solutions \cite{barjga, cli1, PT}.

\medskip

\subsection{Singular Sphere Covering Inequality} A basic inequality related to \eqref{eq:liouv} 
was recently introduced in \cite{GM1}, see Theorem A below, which yields sharp Moser-Trudinger inequalities and symmetry 
properties of Liouville type equations in $\R^2$ \cite{GM1}, symmetry properties of mean field equations on flat tori \cite{GM2}, 
uniqueness of solutions of mean field equation in bounded domains \cite{GM3} and symmetry and uniqueness properties of 
Sinh-Gordon equation and Toda systems in bounded domains \cite{gui-jev-mor}. The inequality can be stated in the following form,
see Theorem 3.1 in \cite{GM1} and Theorem 1.1 in \cite{GM3}.

\medskip

\noindent \textbf{Theorem A} (\cite{GM1})\textbf{.} \emph{Let $\O\subset\R^2$ 
be a smooth, bounded, simply-connected domain and let $u_i\in C^2(\O)\cap C(\ov\O)$, $i=1,2$, satisfy,
\begin{equation} \label{eq}
	\D u_i + h(x)e^{u_i}=f_i(x) \mbox{ in } \O,
\end{equation}
where $h=e^{H}\in C^2(\O)\cap C(\ov\O)$ is such that,
\begin{equation} \label{subharmonic}
	f_2\geq f_1\geq -\D H  \mbox{ in } \O.
\end{equation}  
Suppose that,
$$	
\left\{ \begin{array}{ll}
u_2\geq u_1, \quad u_2 \not\equiv u_1 & \mbox{ in } \O, \vspace{0.2cm}\\
u_2=u_1 & \mbox{ on } \p \O.
\end{array}
\right.
$$ 
Then it holds,
$$
	\int_\O \left( h(x)e^{u_1}+h(x)e^{u_2} \right)\,dx \geq 8\pi.
$$}

\medskip

The latter result is based on symmetric rearrangements and the Alexandrov-Bol inequality, see the
discussion in the sequel. Suppose for simplicity $h\equiv 1$ and $f_1\equiv f_2 \equiv 0$ in \eqref{eq}.
Then, by means of the substitution $\wtilde u_i = \dfrac{u_i}{2}-\ln(2)$, 
Theorem A roughly asserts that the total area of two distinct neighbourhoods $M_1,M_2$, with Gaussian curvature equal to $1$, 
of possibly distinct surfaces, such that $M_1$ and $M_2$ admits local conformal charts $\Phi_i: M_i\to B_1$, $i=1,2$ where $B_1$ is the 
Euclidean unit disk, with the same conformal factor on the boundary, is greater than that of the whole unit sphere, which is why one refers to the latter result as the Sphere Covering Inequality. 

\medskip

We point out that condition \eqref{subharmonic} on the weight $h(x)$ can not be dropped and indeed the result 
is false in general if we remove such assumption. Suppose for a moment $f_1\equiv f_2\equiv 0$ in Theorem A. 
With a small abuse of terminology, a weight $h$ satisfying \eqref{subharmonic} with $f_1 \equiv f_2\equiv 0$, i.e. $\D H\geq 0$, 
will be referred as a subharmonic weight. Analogously, we will refer to superharmonic weights whenever we have the reverse inequality $\D H\leq 0$. 
Such restriction on the weights, which in this case need to be subharmonic, prevents the application of the Sphere Covering Inequality 
in a large class of problems which are rather interesting and 
challenging, some of which we will address later on.

\medskip

On the other hand, Theorem A is obtained in a smooth setting and thus is not suitable in treating singular problems.
However, the presence of singular sources in \eqref{eq:liouv} naturally arises both in geometry and in mathematical physics, 
typically as a sum of Dirac deltas which represent either conic points of the metric or  
vortex points of gauge or vorticity fields (see the references above). One of our aims is to address such kind of problems, see subsections \ref{subsec:mf}, \ref{subsec:domain}. However, the generalization of Theorem~A to a weak setting is not straightforward and one needs to carry out a delicate argument.

\  

Therefore, our first goal in this paper is to provide a singular version of the Sphere Covering Inequality suitable for treating both singular problems and superharmonic weights, see Theorem~\ref{thm:ineq} below. In order to state the result let us introduce some notation first. Given $\a\in[0,1)$, $\l>0$ we set
\begin{equation} \label{U}
	U_{\l,\a}(x) = \ln\left( \dfrac{ \l(1-\a) }{ 1+\frac{\l^2}{8}|x|^{2(1-\a)}} \right)^2,
\end{equation}
which satisfies
\begin{equation} \label{eq:U}
	\D U_{\l,\a} + |x|^{-2\a}e^{U_{\l,\a}}=0 \mbox{ in }\R^2\setminus \{0\}.
\end{equation}

\medskip

Let $\O\subset\R^2$ be a smooth and bounded domain. Let $f\in L^{q}(\O)$ for some $q>1$ and $h=e^H$ be given. Let $F$ be the solution of 
$$
	\D F= f  \mbox{ in $\O$},\quad F=0 \mbox{ on $\partial \O$}, 
$$
so that \eqref{eq:liouv} can be equivalently formulated as
$$
	\D v+e^{H+F}e^v=0 \mbox{ in } \O,
$$
where $v=u-F$. By the Riesz decomposition we have,
$$
	H+F=\mathcal H_+-\mathcal H_-,
$$
where $\mathcal H_+, \mathcal H_-$ are two superharmonic functions taking the form
\begin{equation} \label{om+}
	\mathcal H_{\pm}(x) = \mathfrak h_{\pm}(x) + \int_\O G_x(y)\,d\mu_{\pm}(y),
\end{equation}
where $\mathfrak h_{\pm}$ are harmonic functions in $\O$, $\mu_{\pm}$ are non negative and mutually orthogonal 
measures of bounded total variation in $\O$ and $G_p$, $p\in\O$, is the Green's function,
\begin{equation} \label{green}
	\left\{ \begin{array}{rll}
						-\D G_p(y)=&\d_p & \mbox{in } \O, \\
						G_p(y)=&0 & \mbox{on } \p\O.
					\end{array}
					\right.
\end{equation} 
\begin{definition} \label{def1}
Let $\o\subseteq\O$ be any nonempty subdomain. We will denote by $\wtilde\o$ the interior of the closure of the union of $\o$ with its 
``holes", that is, with the bounded component of the complement of $\o$ in $\R^2$. 
If $\o$ is simply-connected or if it is the union of simply connected domains, then $\wtilde \o\equiv \o$, while if 
the bounded component of the complement of $\o$ in $\R^2$ is not empty, then $\o\subset \wtilde \o$.
\end{definition}

\begin{definition} 
Let $f, h$ be given as above and let $\mu_+$ be defined as in \eqref{om+}. Let $\o\subseteq\O$ be any nonempty subdomain and let $\wtilde\o$ be given as in Definition \ref{def1}. We define $\a(\o)=\a(\o,h,f)\geq 0$ to be
\begin{equation} \label{a}
	\a(\o)=\dfrac{1}{4\pi}\,\mu_+(\wtilde \o).
\end{equation}
\end{definition}

\medskip
								
For reader's convenience let us show how the above construction applies to two model cases.

\medskip

\begin{example} \label{ex1}
\emph{Let us consider a smooth setting where $f, h\in C^2(\O)\cap C(\ov\O)$ and suppose for simplicity $\wtilde\o=\o$. Then we have,
$$
	\a(\o)=\dfrac{1}{4\pi}\int_{E_+} -\D\bigr( H(x)+F(x)\bigr)\,dx=\dfrac{1}{4\pi}\int_{E_+} -\bigr( \D H(x)+f(x)\bigr)\,dx,
$$	
where	
$$
	E_+=\bigr\{x\in\o \, : \, -\bigr( \D H(x)+f(x)\bigr)>0 \bigr\},
$$
while clearly $\a(\o)=0$ whenever $E_+= \emptyset$.
}
\end{example}

\medskip

\begin{example}
\emph{Let us consider a singular weight $h=e^H$ in the form
$$
	H(x)= 4\pi \sum_{i=1}^m \a_i G_{p_i}(x)-4\pi\sum_{j=1}^n \b_j G_{q_j}(x),
$$
where $G_p(x)$ is given in \eqref{green}, $p_i, q_j$ are distinct points in $\O$ and $\a_i, \b_j> 0$. We have, 
$$
	-\D H= 4\pi \sum_{i=1}^m \a_i\d_{p_i} -4\pi\sum_{j=1}^n \b_j\d_{q_j}.
$$
Suppose for simplicity $f=0$ and $\wtilde\o=\o$. Then we have $\mu_+=4\pi \sum_{i=1}^m \a_i \d_{p_i}$ and
$$
	\a(\o)=\sum_{i\in I} \a_i,
$$
where $I=\bigr\{i\in\{1,\dots,m\}\,:\, p_i\in \o\bigr\}$.
}
\end{example}

\medskip

Finally, from now on we will suppose that, 
\begin{equation}\label{hyp1}
 \a(\O)<1\;\;\mbox{i.e.}\;\; \mu_+(\O)<4\pi.
\end{equation}
\noindent

This is motivated by the case when all the measure $\mu_+$ is collapsed to a singular 
Dirac delta, i.e. $\mu_+=4\pi \a_p\d_p$, where we need $\a(\O)=\a_p<1$ to ensure the integrability of $e^{H+F}$. From the geometric point of view this
means that $p$ is not a "cusp" but just a conical singularity, see \cite{Res2}. 

\begin{rem}\label{remxz}
In particular, since we are assuming $\mu_+(\O)<4\pi$, then we deduce the following regularity property. 
Observe that there exists at most one point $x_0\in\O$ such that $\mu_+(x_0)\geq 2\pi$. Thus, if we start from $he^u\in L^1(\O)$ where $u\in L^1(\O)$ is a 
solution of \eqref{eq:liouv} in the sense of distributions, then it can be shown that 
$u\in W^{2,q}(\O)$ for some $q>1$ and that for each $r>0$ small enough there exists $s_r>2$ such that
$u\in W^{2,s_r}(\O\setminus B_r(x_0))$, see \cite{bc}. 
We will quote this regularity property by saying that 
$u\in W_{\mbox{\rm \scriptsize loc}}^{2,s,\mbox{\rm \scriptsize loc}}(\O\setminus\{x_0\})$ for some $s>2$.
In particular we conclude that $u$ is a strong solution of \eqref{eq:liouv}. 
\end{rem}

Our first main result is the following singular Sphere Covering Inequality.
\begin{thm}\label{thm:ineq}
Let $\O\subset\R^2$ be a smooth, bounded, simply-connected domain. Let $x_0\in\O$ be fixed as in Remark \ref{remxz} and
$u_i\in W_{\mbox{\rm \scriptsize loc}}^{2,s,\mbox{\rm \scriptsize loc}}(\O\setminus\{x_0\})\cap W^{2,q}(\O)\cap C(\ov\O)$ for some $s>2$ and some $q>1$, $i=1,2$, satisfy,
\begin{equation} \label{eq2}
	\D u_i + h(x)e^{u_i}=f_i(x) \mbox{ in } \O,
\end{equation}
where $h=e^H$ and $f_1, f_2 \in L^{q}(\O)$ are such that $\a(\O,h,f_i)<1$, $i=1,2$, with $\a(\O,h,f_i)$ defined as in \eqref{a}. Suppose that, 
$$
f_2\geq f_1 \;\mbox{a.e. in } \O,
$$ 
and that there exists a smooth subdomain $\o\subseteq \O$ such that,
\begin{equation} \label{b-cond}	
\left\{ \begin{array}{ll}
u_2 \geq u_1, \quad u_2 \not\equiv u_1 & \mbox{in } \o, \vspace{0.2cm}\\
u_2=u_1 & \mbox{on } \p \o.
\end{array}
\right.
\end{equation} 
Let $\a(\o)=\a(\o,h,f_1)$. Then, it holds
\begin{equation} \label{ineq}
	\int_\o \bigr( h(x)e^{u_1}+h(x)e^{u_2} \bigr)\,dx \geq 8\pi(1-\a(\o)).
\end{equation}
Moreover, the equality holds if and only if (modulo conformal transformations)  
$\o=B_\d(0)$ for some $\d>0$, $f_1\equiv f_2 :=f$, $h(x)e^{u_i}\equiv |x|^{-2\a}e^{U_{\l_i,\a}}$, $i=1,2$, for some $\l_2>\l_1$ 
where $U_{\l_i,\a}$ are defined as in \eqref{U}, $\mu_+=-\Delta H -f=4\pi \a \delta_{p=0}$ in $\o$ and $\a=\a(\o)$.
\end{thm}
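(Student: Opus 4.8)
The plan is to reduce the singular inequality to an Alexandrov–Bol type argument applied to the difference function $w=u_2-u_1$, following the strategy behind Theorem~A but carefully accounting for the singular measure $\mu_+$. First I would set $v_i=u_i-F$ so that the equations become $\D v_i+e^{H+F}e^{v_i}=0$ and the weight $e^{H+F}$ carries the Riesz decomposition $\mathcal H_+-\mathcal H_-$ from \eqref{om+}. The quantity $\a(\o)=\frac{1}{4\pi}\mu_+(\wtilde\o)$ measures exactly the mass of the singular/positive part of $-\D(H+F)$ inside $\o$, so the factor $(1-\a(\o))$ in \eqref{ineq} should emerge as a defect in the isoperimetric deficit coming from the conical/superharmonic contribution. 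The heuristic is that each Dirac mass $4\pi\a_i\delta_{p_i}$ opens a cone of angle $2\pi(1-\a_i)$, reducing the effective area needed to ``cover the sphere'' from $8\pi$ to $8\pi(1-\a(\o))$.

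Next I would introduce the two conformal metrics $g_i=h\,e^{u_i}\,|dx|^2$ on $\o$, whose Gaussian curvatures are forced to be $K_i=\tfrac12$ away from the singular set by \eqref{eq2} once one divides through appropriately; the boundary condition \eqref{b-cond} guarantees the two metrics induce the same conformal factor on $\p\o$. The key analytic tool is a \emph{singular} Alexandrov–Bol inequality: for a metric with curvature bounded above by $\tfrac12$ and conical singularities of total angle deficit $4\pi\a(\o)$, the isoperimetric inequality reads $\ell^2\geq \tfrac12\, A\,\big(8\pi(1-\a(\o))-A\big)$ for the length $\ell$ of $\p\o$ and area $A$. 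Applying this to each $g_i$ and combining with the fact that $u_2\ge u_1$ with equality on $\p\o$ (so the boundary lengths match but the areas differ) yields, after the same comparison of areas used in \cite{GM1}, the lower bound $A_1+A_2\ge 8\pi(1-\a(\o))$, which is precisely \eqref{ineq}. The regularity $u_i\in W^{2,s}_{\mathrm{loc}}(\O\setminus\{x_0\})$ from Remark~\ref{remxz} is what legitimizes the rearrangement and the boundary integration by parts near the at-most-one heavy point $x_0$.

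The main obstacle I expect is making the singular Alexandrov–Bol inequality rigorous when $\mu_+$ is a general measure rather than a finite sum of Dirac deltas, together with the non-simply-connected complications encoded by the passage from $\o$ to $\wtilde\o$. Concretely, one must justify the isoperimetric inequality for metrics whose curvature is a measure with an atomic superharmonic part, controlling the behavior of level sets and the coarea formula across the singular support; this is where the hypothesis $\a(\O)<1$ (i.e. $\mu_+(\O)<4\pi$, ensuring $e^{H+F}\in L^1$ and only conical—not cusp—singularities) is essential. I would handle this by an approximation argument, smoothing $\mu_+$ into a sequence of regular weights $h_n$ satisfying the subharmonic-defect bookkeeping, applying the smooth inequality (or a direct symmetric-rearrangement computation with the model profiles $U_{\l,\a}$ of \eqref{U}), and passing to the limit using the $L^1$ bound on $he^{u_i}$.

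For the equality case I would trace back each inequality used. Equality in Alexandrov–Bol forces the metric to be, up to conformal transformation, a round spherical cap with a single conical singularity, which pins down $\o=B_\d(0)$, forces $\mu_+$ to be a single atom $4\pi\a\,\delta_0$ with $\a=\a(\o)$, and requires $f_1\equiv f_2$ so that no extra curvature defect is absorbed; the solutions must then coincide with the explicit profiles $h\,e^{u_i}\equiv |x|^{-2\a}e^{U_{\l_i,\a}}$ by the classification \eqref{eq:U}, with $\l_2>\l_1$ dictated by $u_2\ge u_1$. The delicate point in the rigidity is verifying that equality propagates simultaneously through the area comparison and the isoperimetric step, which I expect requires showing the two level-set foliations coincide, i.e. $u_1$ and $u_2$ are radial about the same center.
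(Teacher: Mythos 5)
Your overall plan is right in spirit --- reduce to a singular Alexandrov--Bol inequality and a comparison with the model profiles $U_{\l,\a}$ --- but the step where you actually derive \eqref{ineq} has a genuine gap. You claim that applying the isoperimetric inequality
$$
\ell_i^2 \;\geq\; \tfrac12\, A_i\,\bigl(8\pi(1-\a(\o))-A_i\bigr), \qquad \ell_i=\int_{\p\o}\bigl(h e^{u_i}\bigr)^{1/2}d\s,\quad A_i=\int_\o h e^{u_i}\,dx,
$$
to each metric separately, together with $\ell_1=\ell_2$ and $A_2>A_1$, ``yields the lower bound $A_1+A_2\geq 8\pi(1-\a(\o))$''. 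It does not: the quadratic $y\mapsto\tfrac12 y\bigl(8\pi(1-\a)-y\bigr)$ only tells you that each $A_i$ lies either below the smaller root $m_1$ or above the larger root $m_2$ of $\tfrac12 y(8\pi(1-\a)-y)=\ell^2$, and nothing prevents both areas from sitting below $m_1$ (take $\ell$ large and both $A_i$ small: both inequalities hold and $A_1+A_2$ is tiny). The identity $m_1+m_2=8\pi(1-\a)$ is useful only if you can force $A_2$ onto the \emph{upper} branch, and neither $\ell_1=\ell_2$ nor $A_2>A_1$ accomplishes that.

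The missing mechanism --- which is the actual content of the paper's proof, following \cite{GM1} --- is a rearrangement of the \emph{difference} $\phi=u_2-u_1$ rather than of the two solutions themselves. One chooses $\l_1$ so that $\int_\o h e^{u_1}\,dx=\int_{B_1}|x|^{-2\a}e^{U_{\l_1,\a}}\,dx$, rearranges $\phi$ equimeasurably with respect to the two measures $h e^{u_1}\,dx$ and $|x|^{-2\a}e^{U_{\l_1,\a}}\,dx$ (here the singular Alexandrov--Bol inequality of Proposition \ref{bol} enters, via Lemma \ref{rearr}, to control the gradient of the rearrangement on \emph{every} level set of $\phi$, not just on $\p\o$), and uses $\D\phi\geq h e^{u_1}-h e^{u_2}$ to show that $\psi=U_{\l_1,\a}+\phi^*$ satisfies the radial differential inequality $\int_{\p B_r}|\n\psi|\,d\s\leq\int_{B_r}|x|^{-2\a}e^{\psi}\,dx$ for a.e.\ $r$. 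Integrating this inequality (Lemma \ref{comp}) gives the dichotomy $\int_{B_1}|x|^{-2\a}e^{\psi}\,dx\leq m_1$ or $\geq m_2$, and the lower branch is excluded because $\phi^*\geq 0$, $\phi^*\not\equiv 0$ forces $\int_{B_1}|x|^{-2\a}e^{\psi}\,dx>m_1$; since $m_1+m_2=8\pi(1-\a)$ this closes the argument. Without this level-set information your two isolated isoperimetric inequalities are simply too weak, and the same gap propagates to your treatment of the equality case, which in the paper is obtained by tracing equality back through the rearrangement estimate and through Proposition \ref{bol} applied to $u_1$ on $\o$.
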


\medskip

\begin{rem} \label{f=0}
We point out that if $f_1\equiv f_2$, the condition $u_2 \geq u_1,  u_2 \not\equiv u_1$ in the above theorem can be replaced just by $u_2 \not\equiv u_1$.\\
We also observe that if $\a(\o)=0$ we recover the standard Sphere Covering Inequality of Theorem~A, which however holds now in a weak setting. 
\end{rem}

\begin{rem} \label{smooth}
The smoothness assumption in Theorem \ref{thm:ineq} about 
$\o$ is not necessary and can be remarkably weakened as far as $\o \Subset \Omega$, see \cite{bc}. 
\end{rem}

\medskip

The latter result is obtained in the spirit of Theorem~A with a non trivial adaptation to the weak setting by exploiting weighted 
symmetric rearrangements and a singular Alexandrov-Bol's inequality, see section~\ref{sec:ineq} for full details. 
We stress that, according to the terminology introduced right after Theorem~A, we are now able to cover the case of superharmonic weights.

Suppose for the moment that $f_1\equiv f_2 \equiv 0$ in Theorem~\ref{thm:ineq}. Observe that the case 
$h(x)e^{u_i}\equiv |x|^{-2\a}e^{U_{\l_i,\a}}$, $i=1,2$, corresponding to the equality in Theorem~\ref{thm:ineq}
is a limiting case in which all the measure $\mu_+$ is concentrated in a Dirac delta, i.e. $\mu_+=4\pi \a\d_{p=0}$. In particular, 
the latter Dirac delta corresponds to a conical singularity of 
order $-\a$ on the surface related to \eqref{eq:U}, see for example \cite{bc}. 
In other words, by means of the substitution $\wtilde u_i = \dfrac{u_i}{2}-\ln(2)$, in the terminology of singular surfaces (in the sense of Alexandrov \cite{Res2}), 
Theorem \ref{thm:ineq} roughly asserts that
 the total area of two distinct neighbourhoods $M_1,M_2$, 
with regular Gaussian curvature equal to $1$, of possibly distinct singular  surfaces, 
 such that $M_1$ and $M_2$ admits local conformal charts 
$\Phi_i: M_i\to B_1$, $i=1,2$ where $B_1$ is the 
Euclidean unit disk, with the same conformal factor on the boundary, and the same local total (singular) curvature (which is $\frac{1}{4\pi}\mu=-\frac{1}{4\pi} \Delta H$ in $B_1$), 
is greater than that of a whole unit sphere with two antipodal conical singularities 
of order $-\a$ (where $\a$ is $\frac{1}{4\pi}\mu_+(B_1)$), namely an 'American football'. We refer to \cite{bc} for more details concerning this geometric interpretation.
Therefore we allude to Theorem \ref{thm:ineq} as the singular Sphere Covering Inequality.

\

Let us return to equation \eqref{eq2}. If we further assume the two solutions $u_1, u_2$ have the same total mass in $\O$, i.e. that \eqref{uguale} below holds, then 
we can argue as in \cite{GM3} and improve Theorem \ref{thm:ineq}. More precisely, we can relax the boundary condition in \eqref{b-cond} and treat more general situations, see \eqref{b-cond2}. This will be crucially used in proving uniqueness of solutions of singular Liouville equation \eqref{w} on bounded domains, see Theorem \ref{thm:domain}. Even though Theorem \ref{thm:ineq2} shares some similarities with Theorem \ref{thm:ineq}, its proof, based on a reversed Alexandrov-Bol's inequality, substantially differs from the proof of Theorem \ref{thm:ineq}, see section~\ref{sec:domain}. Our second main result is the following.

\begin{thm} \label{thm:ineq2}
Let $\O\subset\R^2$ be a smooth, bounded, simply-connected domain. Let $x_0\in\O$ be fixed as in Remark \ref{remxz} and
$u_i\in W_{\mbox{\rm \scriptsize loc}}^{2,s,\mbox{\rm \scriptsize loc}}(\O\setminus\{x_0\})\cap W^{2,q}(\O)\cap C(\ov\O)$ for some $s>2$ and some $q>1$, $i=1,2$, satisfy,
$$
	\D u_i + h(x)e^{u_i}=f_i(x) \mbox{ in } \O,
$$
where $h=e^H$ and $f_1, f_2 \in L^{q}(\O)$ are such that $\a(\O,h,f_i)<1$, $i=1,2$, where $\a(\O,h,f_i)$ are defined in \eqref{a}. Suppose that,
$$
f_2\geq f_1 \; \mbox{a.e. in } \O,
$$ 
and that,
\begin{equation} \label{b-cond2}	
\left\{ \begin{array}{ll}
u_1 \not\equiv u_2 & \mbox{in } \O, \vspace{0.2cm}\\
u_2-u_1=c & \mbox{on } \p \O,
\end{array}
\right.
\end{equation} 
for some $c\in\R$. Suppose moreover that,
\begin{equation} \label{uguale}
	\int_\O h(x)e^{u_1}\,dx=\int_\O h(x)e^{u_2} \,dx =\rho,
\end{equation}
and set $\a(\O)=\a(\O,h,f_1)$. Then, it holds $\rho>8\pi(1-\a(\O))$.
\end{thm}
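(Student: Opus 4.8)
The plan is to run a nodal-domain comparison between $u_1$ and $u_2$ and to replace the pointwise ordering hypothesis of Theorem~\ref{thm:ineq} by the equal-mass constraint \eqref{uguale}, the analytic engine being a \emph{reversed} singular Alexandrov--Bol inequality.

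First I would record the elementary consequences of the hypotheses. Set $w=u_2-u_1$, so that $w\equiv c$ on $\partial\Omega$. If $w\equiv c$ in $\Omega$, subtracting the two equations gives $h e^{u_1}(e^{c}-1)=f_2-f_1\geq 0$ a.e., hence $c\geq 0$, and then \eqref{uguale} forces $e^{c}\rho=\rho$, i.e. $c=0$ and $u_1\equiv u_2$, contradicting $u_1\not\equiv u_2$. Thus $w-c\not\equiv 0$ vanishes on $\partial\Omega$, while \eqref{uguale} reads $\int_\Omega h(e^{u_2}-e^{u_1})\,dx=0$; since the integrand is not identically zero it must change sign, so both nodal sets $\omega_+:=\{u_2>u_1\}$ and $\omega_-:=\{u_2<u_1\}$ are nonempty, and $u_1=u_2$ on their relative boundaries inside $\Omega$.

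Next I would single out an interior nodal component. The sign of $c$ dictates which one is compactly contained: if $c\leq 0$ then $\omega_+\Subset\Omega$, whereas if $c>0$ then $\omega_-\Subset\Omega$ (a neighbourhood of $\partial\Omega$ lies in the complementary set). On the interior region one has matching boundary data $u_1=u_2$, but the ordering of the data is favourable for Theorem~\ref{thm:ineq} only in the first case: on $\omega_+$ the larger solution is $u_2$ and $f_2\geq f_1$ is exactly the required sign, so after passing to $\widetilde{\omega_+}$ (Definition~\ref{def1}) and using the regularity of Remark~\ref{remxz} the direct inequality gives $\int_{\omega_+}h(e^{u_1}+e^{u_2})\,dx\geq 8\pi(1-\alpha(\omega_+))$. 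On $\omega_-$, however, the larger solution is $u_1$ while $f_2\geq f_1$ is the \emph{wrong} sign, and Theorem~\ref{thm:ineq} is unavailable. This reversed-sign situation is precisely what forces the introduction of the reversed Alexandrov--Bol inequality.

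The heart of the argument, which I would develop along the lines of section~\ref{sec:domain}, is a reversed singular Alexandrov--Bol inequality obtained through weighted symmetric rearrangement adapted to the weight $|x|^{-2\alpha}$ and the measure $\mu_+$. Unlike Theorem~\ref{thm:ineq}, this inequality does not presuppose the pointwise ordering; instead the equal-mass identity \eqref{uguale} is used to match the two rearranged problems along their common boundary length, which is the mechanism that upgrades a single region-wise estimate---by itself only worth the weak bound $\rho\geq 4\pi(1-\alpha)$ coming from $\int_{\omega_\pm}h(e^{u_1}+e^{u_2})\leq 2\rho$---into the sharp bound. Carrying the estimates on the interior region together with the equal-mass coupling, and bookkeeping the total variation of $\mu_+$ through \eqref{a} so that the curvature masses add up to $\alpha(\Omega)$, I expect to obtain $\rho\geq 8\pi(1-\alpha(\Omega))$.

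Finally I would upgrade to the strict inequality by rigidity. If $\rho=8\pi(1-\alpha(\Omega))$, then every inequality used must be saturated; in particular the equality case of Theorem~\ref{thm:ineq} forces the interior region to be a disk, $f_1\equiv f_2$, and $h e^{u_i}=|x|^{-2\alpha}e^{U_{\lambda_i,\alpha}}$ with $\mu_+$ a single Dirac mass, see \eqref{U}. Testing this rigid configuration against the equal total masses \eqref{uguale} on all of $\Omega$ and the nonemptiness of the complementary nodal set yields a contradiction, so the inequality is strict. The main obstacle is the reversed Alexandrov--Bol inequality itself and its correct coupling to the equal-mass constraint: one must perform the weighted rearrangement in the weak/singular setting, control the singular point $x_0$ and the mass $\mu_+$ on the ``holes'' $\widetilde{\omega_\pm}$, and verify that the two nodal contributions combine to exactly $8\pi(1-\alpha(\Omega))$ rather than over- or under-counting the singular curvature.
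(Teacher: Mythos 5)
Your proposal correctly identifies the key ingredients (the sign change of $u_2-u_1$ forced by the equal-mass condition, the need for a reversed singular Alexandrov--Bol inequality, and a rigidity argument for the borderline case), but the central step of the proof is missing: the paragraph you call ``the heart of the argument'' states what the reversed inequality and the equal-mass coupling are supposed to achieve without actually producing the estimate, and the route you set up beforehand does not lead there. Your nodal decomposition applies Theorem~\ref{thm:ineq} on $\omega_+=\{u_2>u_1\}$ only, which as you note yields merely $2\rho\geq 8\pi(1-\a(\o_+))$, i.e.\ $\rho\geq 4\pi(1-\a(\o_+))$; no concrete mechanism is given for recovering the missing factor of two, and on $\o_-$ the sign of $f_2-f_1$ blocks any symmetric use of the direct inequality. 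Saying that the equal-mass identity ``matches the two rearranged problems along their common boundary length'' is not an argument.

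The paper's proof does not decompose $\O$ into nodal domains at all. It performs a \emph{single global} rearrangement of $\phi=u_2-u_1$ over all of $\O$ with respect to the measures $h(x)e^{u_1}dx$ and $|x|^{-2\a}e^{U_{\l_1,\a}}dx$, where $\l_1,R$ are chosen so that $\int_\O h e^{u_1}=\int_{B_R(0)}|x|^{-2\a}e^{U_{\l_1,\a}}$, and sets $\psi=U_{\l_1,\a}+\phi^*$. If $\rho<8\pi(1-\a)$, the contradiction is purely a boundary-value comparison: Lemma~\ref{boundary} (a direct consequence of the radial Alexandrov--Bol inequality, not the reversed one) forces $U_{\l_1,\a}(R)\leq\psi(R)$, while the sign change of $\phi$ forces $\phi^*(R)<0$, i.e.\ $\psi(R)<U_{\l_1,\a}(R)$. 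Only in the case $\rho=8\pi(1-\a)$ does the reversed inequality enter, via Lemma~\ref{comp2} applied on $\R^2\setminus B_R(0)$ for the radius $R$ where $\psi$ crosses $U_{\l_1,\a}$, combined with Lemma~\ref{comp} on $B_R(0)$; the rigidity is then excluded because equality in the rearrangement step would force the superlevel sets $\O_t$ to be simply connected for a.e.\ $t$, which fails for $\min_{\ov\O}\phi<t<0$ since $\phi$ is constant and nonnegative on $\p\O$. To repair your write-up you would need to abandon the nodal splitting, set up this global rearrangement, and prove (or at least precisely state) the boundary comparison lemma that converts the equal-mass hypothesis into the contradiction.
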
 

\medskip

It is worth to point out the following fact concerning the assumption about $\O$ being simply-connected.

\begin{rem} \label{rem:non-sc}
Actually, both Theorems \ref{thm:ineq}, \ref{thm:ineq2} hold for multiply-connected domains $\O$ provided 
the solutions of \eqref{eq2} take constant values on $\p\O$, more precisely $u_i+H=c_i$ on $\p\O$ for some $c_1, c_2 \in\R$. 
This follows from the fact that in the latter situation an Alexandrov-Bol's inequality related to \eqref{eq2} with $\O$ multiply-connected is available, 
see  \cite{BLin3}.
\end{rem}

\

To motivate our studies and to see some applications of Theorems \ref{thm:ineq} and \ref{thm:ineq2} we will address uniqueness issues 
concerning singular Liouville-type equations both on spheres and on bounded domains, 
and symmetry properties for the spherical Onsager vortex equation, see respectively subsections \ref{subsec:mf}, \ref{subsec:domain}, \ref{subsec:onsager} below. 
The argument will yield both to new results as well as to new self-contained proofs of previously known results,
such as the uniqueness of spherical convex polytopes first established in \cite{lt}.
We believe that Theorems \ref{thm:ineq} and \ref{thm:ineq2} will have several 
other applications as it was for Theorem~A (see the discussion before Theorem~A). This will be the topic of our forthcoming papers.

\

\subsection{The singular Liouville equation on $\S^2$.} \label{subsec:mf} Let us start by considering the following equation
\begin{align} \label{eq:mf}
\begin{split}
	& \D_{g} v + \rho \left( \dfrac{e^v}{\int_{\S^2}e^v\,dV_g} -\frac{1}{4\pi}\right)= 4\pi\sum_{j=1}^N \a_j\left(\d_{p_j}-\frac{1}{4\pi}\right) \quad \mbox{on } \S^2, \\
	& \int_{\S^2} v\,dV_g =0,
\end{split}
\end{align}
where $\rho$ is a positive parameter, $\{p_1,\dots,p_N\}\subset\S^2$, $\a_j>-1$ for $j=1,\dots,N$ and $\S^2\subset\R^3$ is the unit sphere with $|\S^2|=4\pi$ 
equipped with its standard Riemannian metric $g$, $\D_g$ is the Laplace-Beltrami operator and $dV_g$ is the volume form. 
Since the equation in \eqref{eq:mf} is invariant under translations $v\mapsto v+c$, then we can normalize solutions to have zero mean value.\\ 
Problem \eqref{eq:mf} is related to both mean field equations with vortex points and spherical metrics with conic singularities. 
We refer to the references in the beginning of the introduction for more details and some of the known results. 
\medskip

Equation \eqref{eq:mf} can be equivalently considered on the plane $\R^2$ via the stereographic projection: suppose without loss of generality that no one of the points $p_i$'s coincides with the north pole $\mathcal N=(0,0,1)\in\R^3$ and let $\Pi:\S^2\setminus\{\mathcal N\}\to \R^2$ be the stereographic projection with respect to $\mathcal N$, i.e.
\begin{equation} \label{Pi}
	\Pi(x_1,x_2,x_3)=\left( \frac{x_1}{1-x_3}\,, \frac{x_2}{1-x_3} \right),
\end{equation} 
and define
$$
	w(x)=v(\Pi^{-1}(x))-\ln\left(\int_{\S^2}e^v\,dV_g\right) \quad x\in\R^2.
$$
With a small abuse of notation we will write $\sum_{j} \a_j$ to denote $\sum_{j=1}^N \a_j$. Then, $w$ satisfies
$$
	\D w +\frac{4\rho}{(1+|x|^2)^2}e^w=\frac{\rho-4\pi\sum_j \a_j}{4\pi}\frac{4}{(1+|x|^2)^2}+4\pi\sum_{j=1}^N \a_j \d_{q_j} \quad \mbox{on } \R^2,
$$
for some $\{q_1,\dots,q_N\}\subset\R^2$. Letting further,
$$
	u(x)=w(x)-\frac{\rho-4\pi\sum_j \a_j}{4\pi}\ln(1+|x|^2)+\ln(4\rho),
$$
we get
\begin{equation} \label{eq:R2}
	\D u +h(x)e^u=4\pi\sum_{j=1}^N \a_j \d_{q_j}  \quad \mbox{on } \R^2,
\end{equation}
where
\begin{equation} \label{h}
	h(x)= (1+|x|^2)^{-l}, \qquad l=\frac{4\pi\bigr(2+\sum_j \a_j\bigr)-\rho}{4\pi}\, ,
\end{equation}
and
$$
	\int_{\R^2} h(x)e^u\,dx=\rho.
$$
Next, let us discuss what is the measure $\mu_+$ defined in \eqref{om+} corresponding to equation \eqref{eq:R2}. To make the presentation simpler let us consider the regular case, i.e. $N=0$, postponing the general case to section~\ref{sec:mf}. Recall the notation $h=e^H$. In order to apply either Theorem A or Theorem~\ref{thm:ineq} we need first to consider the sign of
\begin{equation} \label{sign}
	\D H(x)= \frac{\rho-8\pi}{4\pi}\frac{4}{(1+|x|^2)^2}\,.
\end{equation}
Observe that the sign of this term depends on whether $\rho<8\pi$ or $\rho\geq 8\pi$. 
The value $\rho=8\pi$ plays an important role in Liouville-type problems alike \eqref{eq:mf}: for example, 
it is related to the sharp Moser-Trudinger inequality which yields boundedness from below and coercivity
of the energy functional associated to \eqref{eq:mf} for $\rho<8\pi$ (we refer to the survey \cite{Mal1} 
for full details on this matter). For the latter range of the parameter $\rho$ one expects the solution of
\eqref{eq:mf} to be unique: this indeed holds true and it was first obtained in \cite{Lin0} and \cite{Lin1}. 
The argument used is mainly based on the deep results derived in \cite{cheng-lin}: 
by the asymptotic behavior of solutions to equations alike \eqref{eq:R2} one carries 
out the moving plane method to show that all the solutions to \eqref{eq:R2}, \eqref{h} with $\rho<8\pi$ 
are radially symmetric with respect to the origin.

\medskip

However Theorem A does not apply in this framework 
since the term in \eqref{sign} is negative and thus the weight $h(x)$ is superharmonic according to the terminology
introduced right after Theorem A. 

\medskip

The argument in \cite{Lin0} applies also to the singular case  $N=1$ with singular source $4\pi\a_q\d_q$, $\a_q>-1$. 
Then, letting $\a_-=\min\{\a_q,0\}$ one deduces uniqueness of solution for $\rho<8\pi(1+\a_-)$. On the other hand, the case of multiple 
singular sources $N\geq 3$ is almost completely open due to the fact that we can not rely on radial properties any more. 
The only exceptions concern metrics of constant Gaussian curvature with $N$-conical singularities at $p_j$ of negative order $\a_j\in(-1,0)$ on $\S^2$ 
(convex polytopes), that is, solutions of \eqref{eq:mf} with,
$$
\rho=4\pi\bigr( 2+\sum_j \a_j \bigr).
$$
Based on an algebraic geometric approach, the authors in \cite{lt} showed (among other things) the uniqueness of such metrics 
in the subcritcal case \cite{Troy} (i.e. $4\pi\bigr( 2+\sum_j \a_j \bigr)<8\pi\bigr(1+\min\limits_{j}\{\a_j,0\}\bigr)$), when $\a_j\in(-1,0)$ for all $j=1,\dots,N$ and $N\geq 3$.
Here we will exploit the singular Sphere Covering Inequality of Theorem~\ref{thm:ineq} 
to handle both superharmonic weights and the multiple singular sources in \eqref{eq:R2}, see section \ref{sec:mf}. As a consequence we obtain uniqueness 
results relevant for both the mean field theory of 2D turbulence \cite{clmp2, CK} and the uniqueness of convex polytopes \cite{lt}. Our third main result is the following,
\begin{thm} \label{thm:mf} 
Let $\rho>0$ and $\a_j\in(-1,0)$ for $j=1,\dots,N$, $N\geq0$. Then we have:

\medskip

\mbox{$(i)$} If $N\geq0$ and $\rho<4\pi\bigr( 2+\sum_j \a_j \bigr)$, then \eqref{eq:mf} admits at most one solution;

\smallskip

\mbox{$(ii)$} If $N\geq3$ and $\rho=4\pi\bigr( 2+\sum_j \a_j \bigr)$, then \eqref{eq:mf} admits at most one solution.
\end{thm}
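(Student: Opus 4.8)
The plan is to reduce \eqref{eq:mf} to the planar problem and then to play two copies of the singular Sphere Covering Inequality against each other, exploiting the crucial fact that \emph{every} solution of \eqref{eq:mf} carries the same total mass $\rho$. First I would pass to \eqref{eq:R2}--\eqref{h} via the stereographic projection, so that a solution $v$ of \eqref{eq:mf} corresponds to a solution $u$ of $\Delta u + h e^u = 4\pi\sum_j \alpha_j \delta_{q_j}$ on $\R^2$, with $h=(1+|x|^2)^{-l}$, $l = 2+\sum_j\alpha_j - \frac{\rho}{4\pi}$, and $\int_{\R^2} h e^u\,dx = \rho$. A direct computation of the measure in \eqref{om+}, using $\int_{\R^2}\frac{4}{(1+|x|^2)^2}\,dx = 4\pi$, gives $\mu_+ = l\,\frac{4}{(1+|x|^2)^2}\,dx + 4\pi\sum_j|\alpha_j|\delta_{q_j}$ (recall $\alpha_j<0$). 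In case $(i)$ one has $l>0$, so $\mu_+$ has a nontrivial absolutely continuous part and $\mu_+(\R^2) = 4\pi l + 4\pi\sum_j|\alpha_j| = 8\pi - \rho$, whence $\alpha(\R^2) = 2 - \frac{\rho}{4\pi}$; in case $(ii)$ one has $l=0$, $h\equiv 1$, and $\mu_+ = 4\pi\sum_j|\alpha_j|\delta_{q_j}$ is purely atomic with $\alpha(\R^2)=\sum_j|\alpha_j|$.

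Suppose, by contradiction, that \eqref{eq:mf} has two distinct solutions, and let $u_1\neq u_2$ be the associated solutions on $\R^2$, with $w=u_2-u_1$. Subtracting the equations, $w$ solves the linear equation $\Delta w + c(x)\,w=0$ with $c(x)=h\,\frac{e^{u_2}-e^{u_1}}{u_2-u_1}>0$, and $w\to c_\infty$ at infinity for some constant $c_\infty$, the north pole being a regular point of \eqref{eq:mf}. The regions $\omega_+=\{u_2>u_1\}$ and $\omega_-=\{u_1>u_2\}$ are both nonempty: if for instance $u_2\le u_1$ everywhere, the equal-mass condition $\int h e^{u_2}=\int h e^{u_1}=\rho$ would force $u_1\equiv u_2$. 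Regarded on $\S^2$ both regions are relatively compact, so I would apply Theorem~\ref{thm:ineq} to each of them — to the one whose planar image is bounded directly, and to the other after re-centering the stereographic projection at the antipode (equivalently, an inversion), which leaves both the mass $\int h e^{u_i}$ and the measure $\mu_+$ invariant by the conformal covariance of \eqref{eq:mf}. This yields $\int_{\omega_\pm}(he^{u_1}+he^{u_2})\,dx \ge 8\pi\bigl(1-\alpha(\omega_\pm)\bigr)$.

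Adding the two inequalities and using $\int_{\omega_+\cup\omega_-}(he^{u_1}+he^{u_2})\,dx \le \int_{\R^2}(he^{u_1}+he^{u_2})\,dx = 2\rho$ gives $\alpha(\omega_+)+\alpha(\omega_-)\ge 2-\frac{\rho}{4\pi}=\alpha(\R^2)$. On the other hand, since $\omega_+$ and $\omega_-$ are disjoint and their common boundary, the nodal set of $w$, has zero Lebesgue measure, the fill-ins $\widetilde{\omega_\pm}$ essentially partition $\R^2$ without double-counting $\mu_+$, so that $\alpha(\omega_+)+\alpha(\omega_-)\le \alpha(\R^2)$. Combining the two bounds, equality must hold throughout; in particular the equality case is attained in both applications of Theorem~\ref{thm:ineq}.

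Finally I would invoke the rigidity in the equality case of Theorem~\ref{thm:ineq}: each $\omega_\pm$ must be a disk on which $\mu_+=-\Delta H-f$ reduces to a single Dirac mass $4\pi\alpha(\omega_\pm)\delta$ at its center (an ``American football''). This is impossible in both regimes. In case $(i)$ the measure $\mu_+$ has, thanks to $l>0$, a nontrivial absolutely continuous part on the open sets $\omega_\pm$, which can never equal a single Dirac; in case $(ii)$ the $N\ge 3$ atoms $\{q_j\}$ all lie in $\omega_+\cup\omega_-$, so at least one of the two footballs would have to contain two or more of them, again contradicting the single-Dirac structure. Either way we reach a contradiction, proving that \eqref{eq:mf} has at most one solution. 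The step I expect to be the main obstacle is the global inequality $\alpha(\omega_+)+\alpha(\omega_-)\le\alpha(\R^2)$ together with the verification of the hypothesis $\alpha(\omega_\pm)<1$: one must control the topology of the nodal set of $w$ (ruling out that a hole of one region swallows the mass of the other, and, if necessary, decomposing into connected components), and handle the point at infinity conformally. Here the linear equation $\Delta w + c(x)\,w=0$ with $c>0$, and the fact that for a generic choice of projection point $\mu_+$ places no atom on the nodal set, are the main tools I would rely on.
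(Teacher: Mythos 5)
Your strategy is the paper's in outline (stereographic projection, two applications of Theorem~\ref{thm:ineq} on the regions where $u_2>u_1$ and $u_1>u_2$, summation against the total mass $2\rho$, and the rigidity of the equality case to force strictness), and the rigidity step at the end is carried out correctly in both regimes. However, the two points you defer as ``the main obstacle'' are genuine gaps, and the paper closes them by a different choice of regions and an extra case analysis, not by the route you sketch.

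First, working with the full sets $\omega_{\pm}=\{\pm(u_2-u_1)>0\}$ makes the reverse inequality $\alpha(\omega_+)+\alpha(\omega_-)\le\alpha(\mathbb{R}^2)$ unprovable in general: since $\alpha(\omega)$ is computed on the fill-in $\widetilde{\omega}$ and $\omega_+\cup\omega_-$ is dense, typically a component of one region is surrounded by the other, so $\widetilde{\omega_+}$ and $\widetilde{\omega_-}$ overlap and the measure $\mu_+$ is double-counted (in the extreme case $\widetilde{\omega_+}=\mathbb{R}^2$); moreover Theorem~\ref{thm:ineq} needs each region to sit inside a simply-connected ambient domain, which is also not automatic for the full positivity sets. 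The paper avoids all of this by selecting single pieces $\omega_1\subseteq\Omega_1$, $\omega_2\subseteq\Omega_2$ with $\Omega_1,\Omega_2$ \emph{disjoint simply-connected} regions, so that $\widetilde{\omega_i}\subseteq\Omega_i$ are automatically disjoint; it then never needs your exact identity $\alpha(\omega_+)+\alpha(\omega_-)=\alpha(\mathbb{R}^2)$, only the one-sided bound $\alpha(\omega_1)+\alpha(\omega_2)\le\frac{8\pi-\rho}{4\pi}$ in case $(i)$ (resp.\ $\le-\sum_j\alpha_j$ in case $(ii)$), obtained from $I_s(\omega_1)+I_s(\omega_2)\le I_s(\mathbb{R}^2)=1$ and $J_1\cup J_2\subseteq\{1,\dots,N\}$.

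Second, the hypothesis $\alpha<1$ of Theorem~\ref{thm:ineq} cannot simply be ``verified'': in case $(i)$ the total singular mass is $\alpha(\mathbb{R}^2)=2-\frac{\rho}{4\pi}$, which can be arbitrarily close to $2$, so one of your two regions may genuinely have $\alpha\ge1$ and the Sphere Covering Inequality is then unavailable there; your squeeze-to-equality argument needs \emph{both} inequalities and collapses. The paper's CASE~2 supplies the missing branch (the ``optimization trick'' of Remark~\ref{sharpD}): if, say, $\alpha(\omega_1)\ge1$, then since $\alpha(\omega_1)+\alpha(\omega_2)=\alpha<2$ one gets $\alpha(\omega_2)\le\alpha-1<1$, and applying Theorem~\ref{thm:ineq} to $\omega_2$ alone already yields $2\rho>8\pi(1-\alpha(\omega_2))\ge16\pi-8\pi\alpha\ge2\rho$, the desired contradiction. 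A smaller instance of the same issue occurs in your case $(ii)$: the assertion that all $N$ atoms lie in $\omega_+\cup\omega_-$ is not automatic (an atom may sit on the nodal set); in the paper this is recovered only as a consequence of the equality case, with the strict inequality holding otherwise. Without these two repairs the proposal does not go through.
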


\medskip

The proof of Theorem \ref{thm:mf} is rather delicate since, among other facts (see Remark \ref{sharpD}), it requires a careful use of the 
characterization of the equality sign in the singular Sphere Covering Inequality \eqref{ineq}. After all this is not surprising 
since part $(ii)$ is somehow sharp. Indeed, for $N=1$ solutions to \eqref{eq:mf}  with $\rho=4\pi(2+\a_1)$ do not exist, as 
it is well known that  the 'tear drop' (which is $\S^2$ with one conical singularity) does not admit constant curvature, see for example \cite{barjga, Fang}.
On the other hand, for $N=0$ and $N=2$ (still with $-1<\a_1\leq \a_2<0$ and $\rho=4\pi\bigr( 2+\sum_j \a_j \bigr)$), solutions to \eqref{eq:mf} 
are classified and uniqueness does not hold, see \cite{cli1, PT, troy2}.
In particular, uniqueness fails in general if some $\a_j$ is positive, see for example \cite{lt}.

\smallskip

Moreover, Theorem \ref{thm:mf} also covers most of the previously known results and gives a new self-contained proof of them. 
Indeed, for $N=0$ and $\rho<8\pi$, we get the sharp uniqueness result of \cite{Lin0, Lin1}, for $N=1$ and $\rho<8\pi(1+\a_1)<4\pi(2+\a_1)$, 
we obtain the sharp result of \cite{Lin0}, while part $(ii)$ covers the uniqueness of convex polytopes \cite{lt}.\\
Finally, we remark that whenever the subcriticality condition $\rho<8\pi\bigr(1+\min\limits_{j}\{\a_j,0\}\bigr)$ is also satisfied, 
then  we  have existence \cite{Troy} and uniqueness in $(i)$ and $(ii)$.
The fact that the uniqueness threshold $4\pi\bigr( 2+\sum_j \a_j \bigr)$ may be larger than the subcritical threshold 
seems to suggest another possible application of Theorem~\ref{thm:mf} to the non existence issue for \eqref{eq:mf} in the supercritical regime 
$\rho \in \bigr(8\pi\bigr(1+\min\limits_{j}\{\a_j,0\}\bigr),4\pi\bigr( 2+\sum_j \a_j \bigr)\bigr)$. Indeed, 
the recent evaluation of the topological degree $d_{\rho}$ associated to \eqref{eq:mf} in \cite{cl4} shows that if $N\geq 3$ and $-1<\a_1<\dots<\a_N<0$, then $d_\rho=0$ for 
$\rho \in \bigr(8\pi(1+\a_1),4\pi\bigr( 2+\sum_j \a_j \bigr)\bigr)$. If we knew that any such a solution is non degenerate, then we would conclude by Theorem~\ref{thm:mf} that solutions do not exist in this supercritical region. This motivates the following:\\

{\textbf{Open problem.} Is it true that if $N\geq 3$,\, $-1<\a_1<\dots<\a_N<0$ and $\rho \in \bigr(8\pi(1+\a_1),4\pi\bigr( 2+\sum_j \a_j \bigr)\bigr),$ then \eqref{eq:mf} has no 
solutions?}\\

We point out that some non existence results in this direction were obtained in \cite{barjga, BMal, T1} only for the case $N\leq2$ by using Pohozaev-type identities.

\medskip

In concluding this part we mention the nondegeneracy result for solutions of \eqref{eq:mf} for $N\geq 3$ with only one or two negative $\a_j$ recently obtained in \cite{wz}.

\

\subsection{The singular Liouville equation on bounded domains.} \label{subsec:domain} Next let us consider the counterpart of \eqref{eq:mf} on bounded domains, that is,
\begin{equation} \label{eq:domain}
\left\{ \begin{array}{ll}
\D u+\rho \dfrac{e^u}{\int_{\O}e^u\,dx} = 4\pi\dis{\sum_{j=1}^N} \a_j\d_{p_j} & \mbox{in } \O, \vspace{0.2cm}\\
u=0 & \mbox{on } \p \O,
\end{array}
\right.
\end{equation}
where $\O\subset\R^2$ is a smooth open bounded domain, $\rho$ is a positive parameter, $\{p_1,\dots,p_N\}\subset\O$ and $\a_j>-1$ for $j=1,\dots,N$. 
The latter equation is related to mean field equations of turbulent Euler flows and we refer to the references above for more details about this point.

\medskip

As for \eqref{eq:mf} in subsection \ref{subsec:mf} one expects uniqueness of solutions to \eqref{eq:domain} below a certain level of $\rho$. 
Indeed, for the regular case (i.e. $N=0$), uniqueness was first proved in \cite{suz} for $\O$ simply-connected and $\rho<8\pi$, 
then improved in \cite{CCL} for $\rho=8\pi$ and finally generalized to the case of $\O$ multiply-connected in \cite{BLin3} and \cite{GM3} 
for the case of more general boundary conditions.
The argument is mainly based on the Alexandrov-Bol inequality and the study of the linearized equation, 
and it was generalized in \cite{bl} to cover the singular case where $\rho\leq 8\pi$ and $\a_j>0$ for all $j=1,\dots,N$, $N>0$. 
More recently, in \cite{wz} the authors considered the case of one negative singularity, i.e. $\a_1\in(-1,0)$ and $\a_j>0$ for all $j=2,\dots,N$, 
proving uniqueness of solutions provided $\rho\leq 8\pi(1+\a_1)$.

\medskip

On the other hand, the case of multiple negative singular sources is completely open and we will apply the 
singular Sphere Covering Inequality of Theorem \ref{thm:ineq2} to handle this situation. Indeed, suppose that there exist $u_1, u_2$ satisfying \eqref{eq:domain}. We set,
$$
	w_i(x)=u_i(x)-\ln\left( \int_{\O} e^{u_i}\,dx \right)+\ln(\rho), \quad i=1,2,
$$
so that, for $i=1,2$, we have,
\begin{equation} \label{w}
\left\{ \begin{array}{ll}
\D w_i+e^{w_i} = 4\pi\dis{\sum_{j=1}^N} \a_j\d_{p_j} & \mbox{in } \O, \vspace{0.2cm}\\
w_i=c_i & \mbox{on } \p \O,
\end{array}
\right.
\end{equation}
where
$$
	c_i=-\ln\left( \int_{\O} e^{u_i}\,dx \right)+\ln(\rho), \quad i=1,2,
$$
and
$$
	\int_{\O} e^{w_1}\,dx =\int_{\O} e^{w_2}\,dx=\rho. 
$$
Next, let us define,
$$
J=\bigr\{ j\in\{1,\dots,N\}\, : \, \a_j\in(-1,0) \bigr\},
$$
and let $\a=\a(\O,h,f)>0$, satisfying \eqref{hyp1}, be the measure associated to \eqref{w} as defined in \eqref{a}, that is 
$$
	\a=-\sum_{j\in J}\a_j.
$$

Then, Theorem \ref{thm:ineq2} readily yields our fourth main result.
\begin{thm} \label{thm:domain} 
\sloppy{Let $\O\subset\R^2$ be a smooth, bounded and simply-connected domain and fix \mbox{$\rho\leq8\pi(1-\a)$}. 
Then \eqref{eq:domain} admits at most one solution.}
\end{thm}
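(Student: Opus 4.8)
The plan is to deduce Theorem~\ref{thm:domain} directly from the reversed inequality of Theorem~\ref{thm:ineq2}, arguing by contradiction. First I would suppose that \eqref{eq:domain} admits two distinct solutions $u_1,u_2$ and pass, as in the discussion preceding the statement, to the normalized functions $w_i(x)=u_i(x)-\ln\bigl(\int_\O e^{u_i}\,dx\bigr)+\ln\rho$, which solve \eqref{w} with the mass normalization $\int_\O e^{w_1}\,dx=\int_\O e^{w_2}\,dx=\rho$. The whole point of this rescaling is precisely to enforce the equal-mass hypothesis \eqref{uguale} of Theorem~\ref{thm:ineq2}; this is what lets us trade the a~priori different integrals $\int_\O e^{u_i}$ for a single common value $\rho$. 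Here the weight is $h\equiv 1$, $f_i\equiv 0$, and the singular datum is $-\Delta H-f=4\pi\sum_{j=1}^N\a_j\d_{p_j}$, so that in the notation of \eqref{a} the positive part of the measure is supported on the negative singularities and $\a=\a(\O,h,f)=-\sum_{j\in J}\a_j$, exactly as set up above.

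Next I would verify that the hypotheses of Theorem~\ref{thm:ineq2} are met. The boundary condition \eqref{b-cond2} holds with $c=c_2-c_1$ constant since each $w_i$ is constant on $\p\O$ (equal to $c_i$), and $u_1\not\equiv u_2$ gives $w_1\not\equiv w_2$ in $\O$ provided the two normalizing constants do not conspire to make them coincide; the genuinely distinct case is the one to treat, the degenerate coincidence being handled separately or excluded by the mean-value normalization. The condition $f_2\ge f_1$ is trivial as $f_1\equiv f_2\equiv0$, and the hypothesis $\a(\O,h,f_i)<1$, i.e. \eqref{hyp1}, is guaranteed because $\a<1$ is built into the admissibility range $\rho\le 8\pi(1-\a)$ (one needs $\a<1$ for this interval to be nonempty). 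With all hypotheses in force, Theorem~\ref{thm:ineq2} yields the strict inequality $\rho>8\pi(1-\a(\O))$.

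This is the contradiction: by assumption $\rho\le 8\pi(1-\a)=8\pi(1-\a(\O))$, which is incompatible with $\rho>8\pi(1-\a(\O))$. Hence no two distinct solutions can exist, proving uniqueness.

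The main obstacle I anticipate is not in the logic of the contradiction, which is short, but in ensuring the regularity and integrability needed to legitimately invoke Theorem~\ref{thm:ineq2} for the case of \emph{multiple negative} singularities. One must check that each $w_i$ lies in the required class $W^{2,s,\mbox{\rm \scriptsize loc}}_{\mbox{\rm \scriptsize loc}}(\O\setminus\{x_0\})\cap W^{2,q}(\O)\cap C(\ov\O)$, which by Remark~\ref{remxz} hinges on $\mu_+(\O)=4\pi\a<4\pi$, i.e. again $\a<1$; the point $x_0$ is the unique singular point (if any) carrying mass $\ge2\pi$. A subtler point is that the singular datum $4\pi\sum_j\a_j\d_{p_j}$ contains both signs, and one must confirm that the construction of $\mu_+$ via the Riesz decomposition \eqref{om+} correctly isolates the negative $\a_j$'s into $\mu_+$ so that $\a(\O)=-\sum_{j\in J}\a_j$; this identification, carried out in the setup above, is what makes the threshold $8\pi(1-\a)$ the correct one. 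Once these bookkeeping and regularity checks are in place, the theorem follows immediately, which is why the statement says Theorem~\ref{thm:ineq2} ``readily yields'' it.
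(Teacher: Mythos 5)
Your proof is correct and follows the paper's own route: normalize to $w_i$ so that the equal-mass hypothesis \eqref{uguale} holds with common value $\rho$, note that $w_2-w_1$ is constant on $\p\O$ and that $\a(\O)=-\sum_{j\in J}\a_j<1$ is forced by the admissibility of $\rho\le8\pi(1-\a)$, and then apply Theorem~\ref{thm:ineq2} to obtain $\rho>8\pi(1-\a)$, contradicting the hypothesis. The one hedge you leave open --- that $w_1\equiv w_2$ could conceivably occur with $u_1\not\equiv u_2$ --- is immediately excluded since $u_1=u_2=0$ on $\p\O$ forces $u_1\equiv u_2$ whenever their difference is constant, so the argument is complete as stated.
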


\medskip

We remark that \eqref{eq:domain} admits a variational formulation and that the corresponding functional is well known to 
be coercive for $\rho<8\pi\bigr(1+\min\limits_j\{\a_j,0\}\bigr)$. Therefore 
we have existence and uniqueness in Theorem \ref{thm:domain}  if either $|J|\geq 2$ and $\rho\leq8\pi(1-\a)$ or if $|J|=1$ 
and $\rho<8\pi(1-\a)\equiv 8\pi(1+\a_1)$ or if $|J|=0$ and $\rho<8\pi$.

\begin{rem}
Since Theorem \ref{thm:ineq2} applies to multiply-connected domains, see Remark \ref{rem:non-sc} for more details, 
we conclude that  Theorem \ref{thm:domain} holds for $\O$ multiply-connected as well. Moreover, we can also treat the case where 
$e^u$ is replaced by $e^H e^u$ with $H$ subharmonic and where $u=f$ on $\p\O$ with $f\in C(\partial \O)$. 
\end{rem}

We point out that Theorem \ref{thm:domain} covers all the previously known results. Indeed, for $N=0$ and $\rho\leq 8\pi$, we get the sharp uniqueness result 
of \cite{CCL} and  \cite{BLin3} (i.e. the regular case). For $N>0$, $\a_j>0$ for all $j=1,\dots,N$ and $\rho\leq 8\pi$, we get the sharp result of \cite{bl}, 
while in the case $N\geq 2$ of only one negative singularity $\a_1\in(-1,0)$ with $\rho\leq 8\pi(1+\a_1)$ we recover the sharp result of \cite{wz}.

\begin{rem}\label{sharpD}
It is worth to make a remark about the discrepancy of the uniqueness thresholds as obtained in Theorems \ref{thm:mf} 
and \ref{thm:domain}. It turns out that a rather elementary but still crucial point in the proof of Theorem \ref{thm:mf} is that, 
since the equation is solved on $\S^2$, then one has an upper bound on the total (positive) singular curvature, see \eqref{sum2}. This estimate, in turn, 
allows one to adopt an optimization trick which rules out the case where the singular Sphere Covering Inequality would yield no information, 
that is when $\a(\o)\geq 1$ in Theorem \ref{thm:ineq}. This is not anymore possible on a bounded domain and we come up with a threshold 
which, for $|J|\geq 2$ and unlike the case of $\S^2$, is \underline{always} lower than the subcritical existence threshold $8\pi\bigr(1+\min\limits_j\{\a_j,0\}\bigr)$.
Actually, it seems that this uniqueness result could have been obtained 
also by an adaptation of the argument in \cite{bl} and it is a challenging open problem to understand whether or not uniqueness still holds 
for $\rho \in\bigr(8\pi(1-\a),8\pi\bigr(1+\min\limits_j\{\a_j,0\}\bigr)\bigr)$ with $|J|\geq 2$.
\end{rem}

\

\subsection{The Onsager mean field equation on the sphere.} \label{subsec:onsager} Let us consider the equation, 
\begin{align}\label{OnsagerVortexPDE}
\begin{split}
& \Delta_g v(y)+\frac{\exp\bigr(\beta v(y)-\gamma \langle n, y \rangle \bigr)}{\int_{\S^2} \exp\bigr(\beta v(y)-\gamma \langle n, y \rangle\bigr) \,dV_g}-\frac{1}{4\pi}=0 \quad \mbox{on } \S^2, \\
& \int_{\S^2}v \,dV_g=0,
\end{split}
\end{align}
where $\langle \cdot, \cdot \rangle$ is the scalar product in $\R^3$, $n= \vec{n}\in \R^3$ is a unit vector, 
$\beta\geq 0$ and $\gamma \in \R$. Since $\gamma$ can be changed to $-\gamma$ by replacing the north pole with the south pole,  
there is no loss of generality in assuming that $\gamma \ge 0$. Observe that the equation in \eqref{OnsagerVortexPDE} is invariant under 
the addition of a constants, which is why we can impose the condition of zero mean value. Equation \eqref{OnsagerVortexPDE} is the mean 
field equation arising from the spherical Onsager vortex theory, see \cite{CK, MiRo, PD}.  

\medskip

Let us briefly list the known results concerning \eqref{OnsagerVortexPDE}. By a moving plane argument, it is shown in 
\cite{Lin1} that if $\beta< 8\pi$, then for any $\gamma \geq 0$ the equation (\ref{OnsagerVortexPDE}) has a unique solution which is axially 
symmetric with respect to $\vec{n}$. Moreover, the author made the following conjecture. 

\medskip

\noindent
{\bf Conjecture B.} \emph{Let $\gamma> 0$ and  $\beta \leq 16\pi$. Then every solution of (\ref{OnsagerVortexPDE}) is axially symmetric with respect to $\vec{n}$. }

\medskip

In this direction the following results for $\beta > 8\pi$ has been proved in \cite{Lin2}. 

\medskip

\noindent
{\bf Theorem C} (\cite{Lin2})\textbf{.} \emph{For every $\gamma >0$, there exists $\beta_0=\beta_0(\gamma)>8 \pi$ such that, for $8\pi < \beta \leq \beta_0$, any solution of (\ref{OnsagerVortexPDE}) is axially symmetric with respect to $\vec{n}$.} 

\medskip

\noindent
{\bf Theorem D} (\cite{Lin2})\textbf{.} \emph{Let $\{v_i\}_i$ be a sequence of solutions of (\ref{OnsagerVortexPDE}) with $\gamma=0$ and 
$\beta_i \rightarrow 16 \pi$. Suppose that $\lim_{i \rightarrow \infty} \sup_{\S^2} v_i(y)=+\infty$. Then $v_i$ is axially symmetric with respect to some direction $ \vec{n}_i$ in $\R^3$  for $i$ large enough.} 

\medskip

Recently, in \cite{GM1} the authors applied the standard Sphere Covering inequality, see Theorem A, to prove the following result. 

\medskip

\noindent
{\bf Theorem E} (\cite{GM1})\textbf{.}
\emph{Suppose $8 \pi < \beta \leq 16 \pi$ and 
$$
0 \leq \gamma \leq \frac{\beta}{8\pi}-1. 
$$
Then every solution of (\ref{OnsagerVortexPDE}) is axially symmetric with respect to $\vec{n}$.} 

\medskip

The aim here is to use the singular Sphere Covering Inequality, Theorem  \ref{thm:ineq}, to get a new symmetry result. Our fifth main result is the following.
\begin{thm}\label{thm:onsager}
Suppose $8\pi < \beta \leq 16 \pi$ and 
\begin{equation}\label{improvedEstimate}
0 \leq \gamma \leq 3-\frac{\beta}{8\pi}+\sqrt{2\left(3-\frac{\beta}{8\pi}\right)\left(2-\frac{\beta}{8\pi}\right)} .
\end{equation}
Then every solution of (\ref{OnsagerVortexPDE}) is evenly symmetric with respect to a plane passing through the origin and containing the vector $\vec{n}$.
\end{thm}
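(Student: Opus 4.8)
The plan is to pass to a planar Liouville formulation, exploit the reflection symmetry of the equation across the planes containing $\vec n$, and feed a reflected comparison into the singular Sphere Covering Inequality. First I would reduce \eqref{OnsagerVortexPDE} to an equation on $\R^2$ by stereographic projection $\Pi$ from the north pole $\vec n=\mathcal N$, exactly as in the passage from \eqref{eq:mf} to \eqref{eq:R2}. Writing $\langle n,y\rangle=y_3=\frac{|x|^2-1}{|x|^2+1}$ and setting, for a solution $v$ with $M:=\int_{\S^2}e^{\beta v-\gamma y_3}\,dV_g$,
$$u(x)=\beta\,v(\Pi^{-1}(x))-\gamma\,y_3(x)+A\ln(1+|x|^2)+B,$$
with $A,B$ chosen so that $h:=e^{H}$ is radial, one obtains $\Delta u+h(x)e^{u}=f(x)$ on $\R^2$ with $h,f$ radial and total mass $\int_{\R^2}he^{u}\,dx=\beta$. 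Putting $t:=\beta/8\pi\in(1,2]$, a direct computation gives
$$-\Delta H-f=\frac{8(1-t)}{(1+|x|^2)^2}-\frac{8\gamma(|x|^2-1)}{(1+|x|^2)^3}=\frac{8}{(1+|x|^2)^3}\Big[(\gamma-(t-1))-(\gamma+(t-1))|x|^2\Big],$$
so the positive measure $\mu_+$ is supported in the disk $\{|x|<R_*\}$ with $R_*^2=\frac{\gamma-(t-1)}{\gamma+(t-1)}$ (nonempty precisely when $\gamma>t-1$), and an elementary integration yields the explicit value $\alpha(\R^2)=\frac{(\gamma-(t-1))^2}{2\gamma}$.

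Since $h,f$ are radial, the reflection $\sigma_\ell$ across any line $\ell$ through the origin leaves the equation invariant; these lines are exactly the stereographic images of the planes $P$ through the origin containing $\vec n$, and even symmetry of $v$ across $P$ is equivalent to $u\equiv\bar u:=u\circ\sigma_\ell$. I would therefore argue by contradiction, assuming that $v$ is symmetric across no such plane, so that $u\not\equiv\bar u$ for every $\ell$. Both $u$ and $\bar u$ solve the same equation with the same radial $h,f$, which is the configuration required by Theorems \ref{thm:ineq} and \ref{thm:ineq2}.

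Next I would select the candidate plane by a rotating--plane argument. Parametrizing the lines through the origin by $\ell_\theta$, $\theta\in[0,\pi)$, the mass $\int_{H^+_\theta}he^{u}$ over one half--plane depends continuously on $\theta$ and is exchanged with $\int_{H^-_\theta}he^{u}$ as $\theta\mapsto\theta+\pi$; by the intermediate value theorem there is $\theta_0$ with $\int_{H^+_{\theta_0}}he^{u}=\int_{H^-_{\theta_0}}he^{u}=\beta/2$. Because $u=\bar u$ on $\ell_{\theta_0}$ and the reflection gives $\int_{H^+}he^{u}=\int_{H^+}he^{\bar u}=\beta/2$, the pair $(u,\bar u)$ meets the hypotheses of Theorem \ref{thm:ineq2} on $\Omega=H^+_{\theta_0}$ with $c=0$ and $\rho=\beta/2$, whence the strict inequality $\beta/2>8\pi\bigl(1-\alpha(H^+_{\theta_0})\bigr)$. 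To upgrade this crude balanced--half bound to the sharp threshold I would not fix $\theta_0$ but optimize the comparison region, exploiting the radial symmetry of $\mu_+$ together with the upper bound on the total positive curvature available because the problem lives on $\S^2$ --- precisely the optimization trick used in the proof of Theorem \ref{thm:mf}, which also disposes of the degenerate range $\alpha\geq1$ where the inequality carries no information. Carrying out this optimization and inserting $\alpha(\R^2)=\frac{(\gamma-(t-1))^2}{2\gamma}$ reduces the admissibility condition to the quadratic constraint $\gamma^2-2(3-t)\gamma+(3-t)(t-1)\leq0$, whose larger root is exactly the threshold in \eqref{improvedEstimate}; under \eqref{improvedEstimate} the strict inequality above is violated, forcing $u\equiv\bar u$, i.e. even symmetry across $P_{\theta_0}$. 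The endpoint situations (e.g. $\beta=16\pi$, or $\gamma$ at the boundary of \eqref{improvedEstimate}) would be closed using the equality characterization in Theorem \ref{thm:ineq}, which pins down the ``American football'' configuration and hence symmetry.

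The main obstacle I anticipate is precisely this last step: the honest bookkeeping relating the mass of the nonlinear term on the comparison region to $\beta$ and to the fraction of $\mu_+$ captured by that region, and the optimization that interpolates between the balanced--half regime (governing the endpoint $t=2$, where $\alpha\to0$) and the degenerate regime $\alpha\to1$ (governing the endpoint $t\to1$, where the threshold approaches $4$). One must keep the chosen region in the range $\alpha<1$ so that Theorems \ref{thm:ineq}--\ref{thm:ineq2} apply, and control $\alpha\geq1$ through the spherical total--curvature constraint, which is the genuinely delicate point already signalled for Theorem \ref{thm:mf}.
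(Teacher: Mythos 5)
Your reduction to the plane and your computation of $\mu_+$ are correct and match the paper: with $t=\beta/8\pi$ the positive part of $-\Delta H$ is supported in $\{|x|^2<\frac{\gamma+1-t}{\gamma+t-1}\}$ and integrates to $4\pi\alpha(\R^2)$ with $\alpha(\R^2)=\frac{(\gamma+1-t)^2}{2\gamma}$, exactly as in the paper. The gap is in the core quantitative step. Applying Theorem \ref{thm:ineq2} once, on a mass-balanced half-plane $H^+$ with $\rho=\beta/2$ and $\alpha(H^+)=\tfrac12\alpha(\R^2)$, yields the contradiction hypothesis $\frac{\beta}{2}\leq 8\pi\bigl(1-\tfrac12\alpha(\R^2)\bigr)$, i.e. $t+\frac{(\gamma+1-t)^2}{2\gamma}\leq 2$, whose admissible range is $\gamma\leq 1+\sqrt{t(2-t)}$. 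This is strictly smaller than the claimed threshold $3-t+\sqrt{2(3-t)(2-t)}$ for every $t\in(1,2)$ (e.g.\ at $t=3/2$ you get $\approx 1.87$ versus $\approx 2.72$), so your route proves a genuinely weaker statement. The loss is a factor of $2$ on the $\alpha$-term, and no choice of reflection line recovers it: the "optimization trick" of Theorem \ref{thm:mf} that you invoke only handles the degenerate range $\alpha(\omega_i)\geq 1$; it does not change the one-region structure of your estimate. (There is also the secondary issue that Theorem \ref{thm:ineq2} is stated for bounded simply-connected $\O$, so the half-plane needs a Kelvin transform, but that is cosmetic.)

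The missing idea is the paper's choice of reflection line and the resulting \emph{two} nodal regions inside a single half-plane. The paper reflects across the line through the origin and a maximum point $p$ of $w$ (not a mass-balanced line), sets $\wtilde w=w-w^*$, and observes that $\wtilde w$ vanishes on the axis with $\wtilde w_{x_2}(p)=0$ because $p$ is a critical point of $w$ lying on the axis. If $\wtilde w$ had one sign in $B_R^+(p)$, Hopf's lemma applied to $\D\wtilde w+c(x)\wtilde w=0$ at the boundary point $p$ would force $\wtilde w_{x_2}(p)\neq 0$, a contradiction; hence $\wtilde w$ changes sign \emph{within} $\R^2_+$, producing two disjoint regions $\o_1,\o_2\subset\R^2_+$ on whose boundaries $w=w^*$. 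Applying Theorem \ref{thm:ineq} (not \ref{thm:ineq2}) in each and summing against $\int_{\R^2_+}(he^w+he^{w^*})=\beta$ gives $\beta>16\pi-8\pi\bigl(\a(\o_1)+\a(\o_2)\bigr)$ with $\a(\o_1)+\a(\o_2)\leq\frac{1}{8\pi}\mu_+(\R^2)=\frac{(\gamma+1-t)^2}{4\gamma}$, since the two regions sit in disjoint halves of the (radially symmetric) support of $\mu_+$. This is the extra factor of $2$ that turns your quadratic $\gamma^2-2\gamma+(t-1)^2\leq 0$ into the correct $\gamma^2+2\gamma(t-3)+(t-1)^2\leq 0$ and hence the threshold \eqref{improvedEstimate}. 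Strictness of the inequality is handled as in Theorem \ref{thm:mf} (equality would force $\mu_+$ to be a Dirac mass, impossible for the smooth weight here), not by a separate endpoint analysis.
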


\medskip

\begin{rem}
We point out that for $8 \pi< \beta \leq 16 \pi$ it holds,
$$
3-\frac{\beta}{8\pi} \geq \frac{\beta}{8\pi}-1,
$$
and thus Theorem \ref{thm:onsager} covers a wider range of parameters compared to Theorem E. 
\end{rem}

\

This paper is organized as follows. In section \ref{sec:ineq} we introduce the argument which yields to the proof 
of the singular Sphere Covering Inequality of Theorem~\ref{thm:ineq}, in section \ref{sec:domain} we deduce the improved version of it under same total mass condition, see Theorem \ref{thm:ineq2}, in section \ref{sec:mf} we prove the uniqueness 
result for the singular Liouville equation on $\S^2$, see Theorem~\ref{thm:mf}, and in section \ref{sec:onsager} we finally derive symmetry of solutions for the spherical Onsager vortex equation, i.e. Theorem \ref{thm:onsager}.

\

\section{The Singular Sphere Covering Inequality.} \label{sec:ineq}

\medskip

In this section we derive the singular Sphere Covering Inequality of Theorem~\ref{thm:ineq}. 
The argument is mainly based on weighted symmetric rearrangements and a singular Alexandrov-Bol's inequality in the spirit of Theorem A.

Let us start by recalling the following version of the Alexandrov-Bol inequality, first proved in the analytical framework 
in \cite{band} and more recently generalized to the weak  setting in \cite{bc2, bc}.
\begin{pro}[\cite{bc2,bc}] \label{bol}
Let $\O\subset\R^2$ be a smooth, bounded, simply-connected domain. Let $x_0\in\O$ be fixed as in Remark \ref{remxz} and let 
$u\in W_{\mbox{\rm \scriptsize loc}}^{2,s,\mbox{\rm \scriptsize loc}}(\O\setminus\{x_0\})\cap W_{\mbox{\rm \scriptsize loc}}^{2,q}(\O)$ for some $s>2$ and some $q>1$, satisfy
$$
	\D u + h(x)e^{u}= f(x)  \mbox{ in } \O,
$$
where $h=e^H$ and $f \in L_{\mbox{\scriptsize \emph{loc}}}^{q}(\O)$ are such that $\a(\O,h,f)$ (as defined in \eqref{a}) satisfies $\a(\O,h,f)<1$.
Let $\omega\subseteq \O$ be a smooth subdomain and let $\a(\o)=\a(\o,h,f)$.  Then it holds,
\begin{equation} \label{bol-ineq}
	\left( \int_{\p\omega}\left(h(x)e^{u}\right)^{\frac 12}\,d\s \right)^2 \geq \frac 12 \left( \int_\omega h(x)e^u\,dx \right)\left( 8\pi(1-\a(\o))-\int_\omega h(x)e^u\,dx \right).
\end{equation}
Moreover, the equality holds if and only if (modulo conformal transformations)  
$\o=B_\d(0)$ for some $\d>0$, $h(x)e^{u}\equiv |x|^{-2\a}e^{U_{\l,\a}}$ for some $\l_2>\l_1$ 
where $U_{\l,\a}$ is defined in \eqref{U}, $\mu_+=-\Delta H -f=4\pi \a \delta_{p=0}$ in $\o$ and $\a=\a(\o)$. In particular, if $\o$ is not simply-connected, then the inequality is always strict.
\end{pro}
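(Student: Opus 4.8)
The plan is to prove \eqref{bol-ineq} first for smooth, non-singular data and then to recover the stated weak version by approximation. In the smooth case I would set $p=he^{u}=e^{H+u}$ and regard $p\,|dx|^{2}$ as a conformal metric on $\o$. Using the equation to write $\Delta\ln p=\Delta H+\Delta u=\Delta H+f-p$, the Gaussian curvature of this metric is
\[
K=-\tfrac1{2p}\,\Delta\ln p=\tfrac12-\tfrac1{2p}\,(\Delta H+f)=\tfrac12+\tfrac1{2p}\,(\mu_+-\mu_-),
\]
so that $K\le\tfrac12$ everywhere except for the positive excess of curvature carried by $\mu_+$; by \eqref{a} the total Gauss--Bonnet mass of this excess on the simply-connected filling $\wtilde\o$ equals $\tfrac12\mu_+(\wtilde\o)=2\pi\,\a(\o)$. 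Read in these terms, \eqref{bol-ineq} is exactly the sharp Alexandrov--Bol isoperimetric inequality for a surface whose curvature is $\le\tfrac12$ away from the positive excess $\mu_+$, the extremal configuration being a geodesic cap of the ``American football'' of total area $8\pi(1-\a(\o))$ --- which is precisely why the constant $8\pi$ degrades to $8\pi(1-\a(\o))$.

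For the core isoperimetric estimate I would reduce to the radial model by symmetrization, the route being robust enough to accommodate a diffuse $\mu_+$ and not merely conical atoms. Rearranging $p\,dx$ into a radially symmetric metric on a disk preserves the total mass $\int_\o p\,dx$ and --- by the Euclidean isoperimetric inequality together with the coarea formula --- does not increase the boundary quantity $\int_{\p\o}p^{1/2}\,d\s$; the curvature constraint above, fed through the divergence-theorem identity $\int_{\{u=t\}}|\n u|\,d\s=\int_{\{u>t\}}(p-f)$, then turns the comparison into a first-order differential inequality for the distribution function, whose integration yields \eqref{bol-ineq}. The radial profile saturating every step is exactly $U_{\l,\a}$ of \eqref{U} with weight $|x|^{-2\a}$. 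In the purely atomic case one may instead ``open up'' each cone by the branched conformal change $\z=(z-p_i)^{1-\a_i}$, reducing to the classical curvature-$\le\tfrac12$ inequality of \cite{band}; I find the rearrangement cleaner for tracking the full mass $\mu_+$.

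Running the rearrangement backwards gives the equality statement: equality in the isoperimetric step forces each super-level set --- hence $\o$ itself --- to be a round ball $B_\d(0)$; equality in the curvature bound forces $\mu_-\equiv0$ on $\o$ and the excess positive curvature to collapse into a single atom $\mu_+=4\pi\a\,\d_{0}$ at the centre; together these pin down $he^{u}=|x|^{-2\a}e^{U_{\l,\a}}$ with $\a=\a(\o)$, as claimed. The final assertion --- that \eqref{bol-ineq} is strict when $\o$ is not simply connected --- follows because the whole argument is genuinely carried out on the simply-connected filling $\wtilde\o$ of Definition~\ref{def1}, for which $\a(\wtilde\o)=\a(\o)$: a nonempty hole contributes strictly positive metric mass while the outer boundary $\p\wtilde\o\subseteq\p\o$ is unchanged, which breaks equality.

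The main obstacle is the weak, singular setting. Since $u$ belongs only to $W^{2,q}(\O)\cap W_{\mbox{\rm \scriptsize loc}}^{2,s,\mbox{\rm \scriptsize loc}}(\O\setminus\{x_0\})$ and $\mu_+$ may carry an atom of mass up to (but strictly below) $4\pi$ --- in particular the distinguished atom of mass possibly $\ge2\pi$ at $x_0$ of Remark~\ref{remxz} --- neither the Gauss--Bonnet accounting nor the rearrangement is literally justified at the outset. I would therefore regularize by mollifying $h$ and $f$ (equivalently, approximating $\mu_{\pm}$ by smooth densities), prove the inequality for the resulting smooth solutions $u^{(\e)}$, and pass to the limit $\e\to0$, using strong $L^{1}$ convergence of $h^{(\e)}e^{u^{(\e)}}$ to control the mass term and lower semicontinuity to control $\int_{\p\o}(he^{u})^{1/2}\,d\s$. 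The delicate point is to keep the sharp constant $8\pi(1-\a(\o))$ stable, i.e. to show $\mu^{(\e)}_+(\wtilde\o)\to\mu_+(\wtilde\o)$ with no mass escaping through $\p\wtilde\o$ nor lost at $x_0$; this is exactly where the standing hypothesis $\a(\O)<1$ (integrability of $he^{u}$, so that the singularity is conical rather than a cusp) enters, and I expect this limiting analysis --- carried out in \cite{bc} --- to be the technical heart of the proof.
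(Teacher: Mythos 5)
The paper does not actually prove Proposition \ref{bol}: it is imported verbatim from \cite{bc2,bc} (the smooth case going back to \cite{band}), so there is no in-paper argument to measure your proposal against. Judged on its own terms, your sketch correctly identifies the geometric content --- the conformal metric $he^{u}|dx|^{2}$ has curvature $\leq \frac12$ up to the positive singular excess $\mu_+$, which degrades the constant to $8\pi(1-\a(\o))$ with $\a(\o)=\frac{1}{4\pi}\mu_+(\wtilde\o)$ --- and it correctly locates the role of $\wtilde\o$ in the strictness statement for non-simply-connected $\o$. But the two steps that carry the entire weight of the proof are both gapped.

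First, the core isoperimetric step. You assert that radially rearranging $p\,dx=he^{u}dx$ ``does not increase the boundary quantity $\int_{\p\o}p^{1/2}\,d\s$ by the Euclidean isoperimetric inequality together with the coarea formula.'' The Euclidean isoperimetric inequality controls Lebesgue perimeter against Lebesgue area; it says nothing about the weighted length $\int_{\p\o}p^{1/2}\,d\s$, and Cauchy--Schwarz relates the two quantities in the wrong direction for this purpose. The differential inequality for the distribution function that you then invoke requires, at each level $t$, a lower bound for the weighted perimeter of the super-level set in terms of the weighted mass it encloses --- which is precisely the inequality being proved, so as written the argument is circular. The known proofs break this circle either by Bandle's conformal transplantation (split off the harmonic part of $\ln p$ and flatten it by a conformal map, which is where simple connectivity enters, after which the Euclidean isoperimetric inequality genuinely applies to the level sets of the remaining superharmonic part), or, in \cite{bc2,bc}, by Alexandrov's isoperimetric inequality for surfaces of bounded integral curvature. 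Note also that your identity $\int_{\{u=t\}}|\n u|\,d\s=\int_{\{u>t\}}(p-f)\,dx$ is only usable when $\o$ is a super-level set of $u$, whereas $\o$ is an arbitrary smooth subdomain; the reduction to level sets is itself part of the work. Second, the regularization. Mollifying $h$ and $f$ does not produce solutions $u^{(\e)}$ converging to the given $u$ unless a specific approximation scheme is set up (mollifying $u$ instead leaves error terms in the equation), and the convergence $\mu_+^{(\e)}(\wtilde\o)\to\mu_+(\wtilde\o)$ needed to keep the sharp constant is exactly the kind of statement that weak-$*$ convergence of measures does not provide when $\mu_+$ charges $\p\wtilde\o$. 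This is why \cite{bc2,bc} avoid mollification altogether and work directly in the weak setting via the Alexandrov--Reshetnyak theory; you have correctly located this difficulty but not resolved it.
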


\medskip

\begin{rem}\label{uniqueAlB}
The smoothness assumption about $\o$ is not necessary and can be remarkably weakened as far as $\o \Subset \Omega$, see \cite{bc}. Moreover,
we point out that if $\mu_+=0$ then $\a(\o,h,f)=0$ and we recover the standard Alexandrov-Bol inequality.
\end{rem}


\medskip

We will need in the sequel the following counterpart of the singular Alexandrov-Bol
inequality in the radial setting which is derived in the spirit of \cite{GM1, suz}.
\begin{pro} \label{rad}
Let $\a\in[0,1)$, $R>0$ and $\psi\in C(\ov{B_R(0)})\cap W^{1,p}(B_R(0))$ for some $p>2$,  be a strictly decreasing radial function satisfying, 
\begin{equation} \label{est-rad}
	\int_{\p B_r(0)} |\n\psi| \,d\s \leq \int_{B_r(0)} |x|^{-2\a}e^{\psi}\,dx \quad \mbox{for a.e. } r\in (0,R).
\end{equation}
Then,
$$
\left( \int_{\p B_R(0)} \left(|x|^{-2\a}e^{\psi}\right)^{\frac 12}\,d\s \right)^2 \geq \frac 12 
\left( \int_{B_R(0)} |x|^{-2\a}e^{\psi}\,dx \right)\left( 8\pi(1-\a)-\int_{B_R(0)}|x|^{-2\a}e^{\psi}\,dx \right).
$$
Moreover, if $\int_{\p B_r(0)} |\n\psi| \,d\s \not\equiv \int_{B_r(0)} |x|^{-2\a}e^{\psi}\,dx $ in $(0,R)$, then the  inequality is strict.
\end{pro}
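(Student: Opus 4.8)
The plan is to reduce the inequality to a one-dimensional monotonicity statement for the radial profile of $\psi$ and then integrate an elementary differential inequality. Writing $\psi=\psi(r)$ with $r=|x|$, and using that $\psi$ is radial and strictly decreasing (so that $|\nabla\psi|=-\psi'(r)$ a.e.), I would introduce the mass function
\[
	m(r)=\int_{B_r(0)}|x|^{-2\alpha}e^{\psi}\,dx=2\pi\int_0^r t^{1-2\alpha}e^{\psi(t)}\,dt
\]
together with the boundary quantity $L(r)=\int_{\partial B_r(0)}\big(|x|^{-2\alpha}e^{\psi}\big)^{1/2}\,d\sigma=2\pi r^{1-\alpha}e^{\psi(r)/2}$. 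Since $\psi\in W^{1,p}$ with $p>2$, its radial profile is absolutely continuous, whence $m$ and $L$ are absolutely continuous with $m'(r)=2\pi r^{1-2\alpha}e^{\psi(r)}$ a.e.; in particular one records the key algebraic identity $L(r)^2=2\pi r\,m'(r)$. Moreover the hypothesis \eqref{est-rad}, which in radial form reads $-2\pi r\,\psi'(r)\le m(r)$, becomes exactly $2\pi r\,\psi'(r)+m(r)\ge 0$ a.e.

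Next I would set
\[
	\Phi(r)=L(r)^2-\tfrac12\,m(r)\big(8\pi(1-\alpha)-m(r)\big),
\]
so that the claimed inequality is precisely $\Phi(R)\ge 0$. Because $\alpha<1$ forces $r^{1-\alpha}\to 0$ and $m(r)\to 0$ as $r\to 0^+$ (the weight $t^{1-2\alpha}$ is integrable at the origin iff $\alpha<1$), one has $\Phi(0)=0$, and it therefore suffices to show that $\Phi$ is nondecreasing. Differentiating and using $(L(r)^2)'=L^2\big(\tfrac{2(1-\alpha)}{r}+\psi'\big)$ together with $L^2=2\pi r\,m'$, I expect the two terms carrying the factor $8\pi(1-\alpha)$ (more precisely $4\pi(1-\alpha)m'$) to cancel, leaving the clean expression
\[
	\Phi'(r)=m'(r)\big(2\pi r\,\psi'(r)+m(r)\big).
\]
Since $m'(r)>0$ and the reformulated hypothesis gives $2\pi r\,\psi'(r)+m(r)\ge 0$ a.e., this yields $\Phi'\ge 0$, and integrating from $0$ to $R$ gives $\Phi(R)\ge\Phi(0)=0$, which is the desired estimate. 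For the strict version, if the hypothesis is strict on a set of positive measure then $\Phi'>0$ there (as $m'>0$), forcing $\Phi(R)>0$.

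The miraculous cancellation is the heart of the argument, but it is entirely mechanical once the identity $L^2=2\pi r\,m'$ is in hand. I expect the main technical obstacle to lie instead in the low regularity: one must justify that the radial profile of $\psi$ is absolutely continuous and that the fundamental theorem of calculus applies to $\Phi$ (so that $\Phi(R)-\Phi(0)=\int_0^R\Phi'$), which rests on the embedding $W^{1,p}\hookrightarrow C^{0,\gamma}$ for $p>2$. One must also verify the boundary behaviour $\Phi(0)=0$, which relies crucially on $\alpha<1$ both to make $r^{1-\alpha}e^{\psi(r)/2}\to 0$ and to guarantee the integrability of the possibly singular weight at $r=0$.
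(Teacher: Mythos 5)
Your proof is correct and is essentially the paper's argument rewritten in the radial variable: under the change of variables $t=\psi(r)$ your monotone quantity $\Phi(r)=L^2(r)-\tfrac12 m(r)\bigl(8\pi(1-\a)-m(r)\bigr)$ coincides, up to the constant factor $4\pi(1-\a)$, with the quantity $e^t\mu(t)-k(t)+\tfrac{1}{8\pi(1-\a)}k^2(t)$ that the paper shows to be non-increasing in $t$ via the co-area formula and Cauchy--Schwarz (both of which degenerate to your exact identity $L^2=2\pi r\,m'$ in the radial setting). The cancellation you flag, the vanishing of the quantity at one endpoint ($r\to 0^+$, i.e.\ $t\to+\infty$), and the treatment of the strict case all match the paper's proof.
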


\begin{proof}
We start by letting $\b=\psi(R)$ and
$$
	k(t)=\int_{\{\psi>t\}} |x|^{-2\a}e^{\psi}\,dx, \qquad \mu(t)= \int_{\{\psi>t\}} |x|^{-2\a}\,dx, \quad t>\b.
$$
Clearly, $k$ and $\mu$ are absolutely continuous and by using the co-area formula we find that,
\begin{equation} \label{co-area}
	-k'(t)=\int_{\{\psi=t\}} \dfrac{|x|^{-2\a}e^{\psi}}{|\n \psi|}\,d\s = -e^t\mu'(t),
\end{equation}
for a.e. $t>\b$. Therefore, by using \eqref{est-rad} and then the Cauchy-Schwarz inequality we deduce that,
\begin{align*}
	-k(t)k'(t) & = \left(\int_{\{\psi>t\}} |x|^{-2\a}e^{\psi}\,dx\right) \left(\int_{\{\psi=t\}} \dfrac{|x|^{-2\a}e^{\psi}}{|\n \psi|}\,d\s\right) \\
			& \geq \left(\int_{\{\psi=t\}} |\n\psi| \,d\s\right)  \left(\int_{\{\psi=t\}} \dfrac{|x|^{-2\a}e^{\psi}}{|\n \psi|}\,d\s\right)  \\
			& = \left(\int_{\{\psi=t\}} \left(|x|^{-2\a}e^{\psi}\right)^{\frac 12}\,d\s\right)^2 =e^t \left(\int_{\{\psi=t\}} |x|^{-\a}\,d\s\right)^2,
\end{align*}
for a.e. $t>\b$. Moreover we have,
$$
	 \left(\int_{\{\psi=t\}} |x|^{-\a}\,d\s\right)^2= 4\pi(1-\a)\int_{\{\psi>t\}} |x|^{-2\a}\,dx=4\pi(1-\a)\mu(t),
$$
and recalling \eqref{co-area} it follows that,
$$
	\dfrac{d}{dt} \left( e^t\mu(t)-k(t)+\dfrac{1}{8\pi(1-\a)}k^2(t) \right) = e^t\mu(t)+\dfrac{1}{4\pi(1-\a)}k'(t)k(t)\leq 0,
$$
for a.e. $t>\b$. Therefore, by integrating the latter equation, we deduce that,
$$
	\left[ e^t\mu(t)-k(t)+\dfrac{1}{8\pi(1-\a)}k^2(t) \right]_{\b}^{+\infty} = -\left( e^\b\mu(\b)-k(\b)+\dfrac{1}{8\pi(1-\a)}k^2(\b) \right) \leq 0,
$$
namely,
$$
	e^\b\mu(\b) \geq k(\b)\left( 1- \dfrac{1}{8\pi(1-\a)}k(\b)\right).
$$
To conclude the proof  it is enough to observe that,
$$
	k(\b)=\int_{B_R(0)} |x|^{-2\a}e^{\psi}\,dx 
$$
and that,
\begin{align*}
	e^\b\mu(\b)&=e^\b\int_{B_R(0)} |x|^{-2\a}\,dx =e^\b \dfrac{1}{4\pi(1-\a)}\left(\int_{\p B_R(0)} |x|^{-\a}\,d\s\right)^2 \\
		&=\dfrac{1}{4\pi(1-\a)}\left(\int_{\p B_R(0)} \left(|x|^{-2\a}e^{\psi}\right)^{\frac 12}\,d\s\right)^2.
\end{align*}
Furthermore, going back through the argument it is clear that if $\int_{\p B_r(0)} |\n\psi| \,d\s \not\equiv \int_{B_r(0)} |x|^{-2\a}e^{\psi}\,dx $ in $(0,R)$, then the inequality in Proposition \ref{rad} is strict.
\end{proof}

\medskip

With a similar argument it is possible to prove the following reversed Alexandrov-Bol inequality.
\begin{pro} \label{bol-rev}
Let $\a\in[0,1)$, $R>0$ and $\psi\in C(\R^2\setminus B_R(0))\cap W_{\mbox{\scriptsize \emph{loc}}}^{1,p}(\R^2\setminus B_R(0))$ for some $p>2$,  be a strictly decreasing radial function satisfying, 
\begin{equation} \label{est-rev}
	\int_{\p B_r(0)} |\n\psi| \,d\s \leq 8\pi(1-\a)-\int_{\R^2\setminus B_r(0)} |x|^{-2\a}e^{\psi}\,dx \quad \mbox{for a.e. } r\in (R,+\infty)
\end{equation}
and $\int_{\R^2\setminus B_R(0)} |x|^{-2\a}e^{\psi}\,dx<8\pi(1-\a)$. Then,
$$
\left( \int_{\p B_R(0)} \left(|x|^{-2\a}e^{\psi}\right)^{\frac 12}\,d\s \right)^2 \leq \frac 12 
\left( \int_{\R^2\setminus B_R(0)} |x|^{-2\a}e^{\psi}\,dx \right)\left( 8\pi(1-\a)-\int_{\R^2\setminus B_R(0)}|x|^{-2\a}e^{\psi}\,dx \right).
$$
Moreover, if $\int_{\p B_r(0)} |\n\psi| \,d\s \not\equiv 8\pi(1-\a)-\int_{\R^2\setminus B_r(0)} |x|^{-2\a}e^{\psi}\,dx $ in $(R,+\infty)$, then the  inequality is strict.
\end{pro}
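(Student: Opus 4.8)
The plan is to mirror the proof of Proposition \ref{rad}, reversing the roles of interior and exterior and replacing the mass by its complement. Writing $\b=\psi(R)$, the strict monotonicity of the radial profile means that for every $t<\b$ the level set $\{\psi=t\}$ is a single circle $\p B_{r(t)}$ with $r(t)>R$, $r(\b)=R$, and $r(t)\to+\infty$ as $t$ decreases; since $\int_{\R^2\setminus B_R}|x|^{-2\a}e^{\psi}\,dx<\infty$ and $\psi$ is decreasing, one checks that necessarily $\inf\psi=-\infty$. I would then introduce the exterior mass $\tilde k(t)=\int_{\{\psi<t\}}|x|^{-2\a}e^{\psi}\,dx$, its complement $\hat k(t)=8\pi(1-\a)-\tilde k(t)$, and the purely geometric quantity $\mu(t)=\int_{B_{r(t)}}|x|^{-2\a}\,dx=\frac{\pi}{1-\a}r(t)^{2(1-\a)}$. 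Note that here $\mu$ must be the integral over the \emph{full} ball, which keeps it finite, whereas the exterior integral of $|x|^{-2\a}$ would diverge for $\a<1$; the same elementary computation as in Proposition \ref{rad} still gives the geometric identity $\bigl(\int_{\{\psi=t\}}|x|^{-\a}\,d\s\bigr)^2=4\pi(1-\a)\mu(t)$.

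The co-area formula yields $\tilde k'(t)=\int_{\{\psi=t\}}\frac{|x|^{-2\a}e^{\psi}}{|\n\psi|}\,d\s=-e^{t}\mu'(t)$, hence $\hat k'(t)=e^{t}\mu'(t)$. Using the hypothesis \eqref{est-rev} in the form $\int_{\{\psi=t\}}|\n\psi|\,d\s\le\hat k(t)$, together with Cauchy--Schwarz exactly as before, I obtain $-\hat k(t)\hat k'(t)\ge 4\pi(1-\a)e^{t}\mu(t)$, and therefore
\[
\frac{d}{dt}\Bigl(e^{t}\mu(t)-\hat k(t)+\tfrac{1}{8\pi(1-\a)}\hat k^2(t)\Bigr)=e^{t}\mu(t)+\tfrac{1}{4\pi(1-\a)}\hat k(t)\hat k'(t)\le 0 .
\]
Thus $\Phi(t):=e^{t}\mu(t)-\hat k(t)+\frac{1}{8\pi(1-\a)}\hat k^2(t)$ is non-increasing on $(-\infty,\b)$, and integrating $\Phi'\le0$ over $(t_0,\b)$ and letting $t_0\downarrow-\infty$ gives $\Phi(\b)\le\lim_{t\to-\infty}\Phi(t)$.

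The key new point, and the main obstacle compared with Proposition \ref{rad} (where the corresponding endpoint was the maximum of $\psi$, at which every quantity trivially vanishes), is to show that the boundary term at infinity is zero, i.e. $\lim_{r\to+\infty}r^{2(1-\a)}e^{\psi(r)}=0$, so that $\lim_{t\to-\infty}\Phi(t)=\lim_{t\to-\infty}e^{t}\mu(t)=0$ (the remaining terms tend to $-8\pi(1-\a)+8\pi(1-\a)=0$ since $\hat k\to 8\pi(1-\a)$). I would prove this decay directly from the finite-mass hypothesis: because $\psi$ is decreasing, for large $r$ one has $\int_{r/2}^{r}2\pi s^{1-2\a}e^{\psi(s)}\,ds\ge c_{\a}\,r^{2(1-\a)}e^{\psi(r)}$ with $c_{\a}=\frac{\pi}{1-\a}\bigl(1-2^{-2(1-\a)}\bigr)>0$, while the left-hand side is a tail of the convergent integral $\int_R^{+\infty}2\pi s^{1-2\a}e^{\psi(s)}\,ds=\int_{\R^2\setminus B_R}|x|^{-2\a}e^{\psi}\,dx$ and hence tends to $0$. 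This forces $r^{2(1-\a)}e^{\psi(r)}\to0$, whence $\Phi(\b)\le0$.

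Finally I would translate $\Phi(\b)\le0$ back into the statement. Evaluating at $t=\b$ gives $e^{\b}\mu(\b)=\frac{1}{4\pi(1-\a)}\bigl(\int_{\p B_R}(|x|^{-2\a}e^{\psi})^{1/2}\,d\s\bigr)^2$, $\tilde k(\b)=\int_{\R^2\setminus B_R}|x|^{-2\a}e^{\psi}\,dx$ and $\hat k(\b)=8\pi(1-\a)-\tilde k(\b)$; a one-line rearrangement then shows that $4\pi(1-\a)\Phi(\b)$ equals the left-hand side minus the right-hand side of the claimed inequality, so $\Phi(\b)\le0$ is exactly the asserted reversed Alexandrov--Bol inequality. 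For the strictness statement, if \eqref{est-rev} is not an identity on $(R,+\infty)$ then $\Phi'\le0$ is strict on a set of positive measure, whence $\Phi(\b)<0$ and the inequality is strict. The only genuinely new ingredient relative to Proposition \ref{rad} is the decay estimate of the third paragraph; the remainder is a faithful, sign-adjusted transcription of the monotone-ODE argument.
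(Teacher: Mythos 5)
Your proof is correct and follows essentially the same route as the paper, whose own argument simply sets $k(t)=8\pi(1-\a)-\int_{\{\psi<t\}}|x|^{-2\a}e^{\psi}\,dx$ and $\mu(t)=\int_{\{\psi>t\}}|x|^{-2\a}\,dx+\frac{\pi}{1-\a}R^{2(1-\a)}$ (identical to your $\hat k$ and your full-ball $\mu$, since $\int_{\{\psi>t\}}|x|^{-2\a}\,dx+\frac{\pi}{1-\a}R^{2(1-\a)}=\frac{\pi}{1-\a}r(t)^{2(1-\a)}$) and then refers to the monotone-ODE argument of Proposition \ref{rad} and to \cite{GM2} for the remaining details. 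The only point you add beyond the paper's sketch is the verification that the boundary term at infinity vanishes, i.e.\ $r^{2(1-\a)}e^{\psi(r)}\to 0$, and your tail estimate for this is correct.
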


\begin{proof}
We let $\b=\psi(R)$ and
$$
	k(t)=8\pi(1-\a)-\int_{\{\psi<t\}} |x|^{-2\a}e^{\psi}\,dx, \quad \mu(t)= \int_{\{\psi>t\}} |x|^{-2\a}\,dx +\dfrac{\pi}{1-\a} R^{2(1-\a)}, 
$$
for $t<\b$. The argument follows then the same steps of the proof of Proposition~\ref{rad} and we refer to \cite{GM2} for further details.
\end{proof}

\medskip

Next, as in \cite{GM1}, we relate the strictly decreasing radial function $\psi$ satisfying \eqref{est-rad} with the functions $U_{\l_1,\a}, U_{\l_2,\a}$ defined in \eqref{U} with $\l_2>\l_1$ and $\a\in[0,1)$, such that $\psi=U_{\l_1,\a}=U_{\l_2,\a}$ on $\p B_R(0)$. 
\begin{lem} \label{comp}
Let $U_{\l_1,\a}, U_{\l_2,\a}$ be defined as in \eqref{U} with $\l_2>\l_1$ and $\a\in[0,1)$. 
Let $\psi\in C(\ov{B_R(0)})\cap W^{1,p}(B_R(0))$ for some $p>2$,  be a strictly decreasing radial function satisfying, 
\begin{equation} \label{estim}
	\int_{\p B_r(0)} |\n\psi| \,d\s \leq \int_{B_r(0)} |x|^{-2\a}e^{\psi}\,dx \quad \mbox{for a.e. } r\in (0,R)
\end{equation}
and $\psi=U_{\l_1,\a}=U_{\l_2,\a}$ on $\p B_R(0)$. Then,  either
\begin{align*}
	 &\int_{B_R(0)} |x|^{-2\a}e^{\psi}\,dx \leq \int_{B_R(0)} |x|^{-2\a}e^{U_{\l_1,\a}}\,dx \\
	 \mbox{or} \qquad &\int_{B_R(0)} |x|^{-2\a}e^{\psi}\,dx \geq \int_{B_R(0)} |x|^{-2\a}e^{U_{\l_2,\a}}\,dx.
\end{align*}	
If $\int_{\p B_r(0)} |\n\psi| \,d\s \not\equiv \int_{B_r(0)} |x|^{-2\a}e^{\psi}\,dx $ in $(0,R)$, then the above inequalities are strict. 

Moreover, we have
$$
\int_{B_R(0)} \left(|x|^{-2\a}e^{U_{\l_1,\a}}+|x|^{-2\a}e^{U_{\l_2,\a}} \right)\,dx = 8\pi(1-\a).
$$
\end{lem}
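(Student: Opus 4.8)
The plan is to reduce the whole statement to the behaviour of the \emph{explicit} masses of the model functions together with a one–variable parabola argument, using the radial Alexandrov--Bol inequality of Proposition \ref{rad} as the only analytic input. I would first dispose of the last identity. A direct integration in polar coordinates (with the substitution $s=r^{2(1-\a)}$) puts the model mass in closed form,
$$
M(\l):=\int_{B_R(0)}|x|^{-2\a}e^{U_{\l,\a}}\,dx = 8\pi(1-\a)\,\frac{T}{1+T},\qquad T=T(\l)=\frac{\l^2}{8}\,R^{2(1-\a)},
$$
which is strictly increasing in $\l>0$; in particular $M(\l_1)<M(\l_2)$. The boundary condition $U_{\l_1,\a}=U_{\l_2,\a}$ on $\p B_R(0)$, written out from \eqref{U}, factors as $(\l_1-\l_2)\bigl(1-\tfrac{R^{2(1-\a)}}{8}\l_1\l_2\bigr)=0$, so that $\l_1\l_2=8/R^{2(1-\a)}$, i.e. $T(\l_1)T(\l_2)=1$. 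Substituting this into $\tfrac{T_1}{1+T_1}+\tfrac{T_2}{1+T_2}$ collapses it to $1$, which is exactly the claimed identity $M(\l_1)+M(\l_2)=8\pi(1-\a)$.

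For the dichotomy I would compare \emph{boundary} integrals rather than masses. Set $m(r)=\int_{B_r(0)}|x|^{-2\a}e^{\psi}\,dx$; since $\psi$ satisfies \eqref{estim} on all of $(0,R)$, Proposition \ref{rad} applies on $B_R(0)$ and yields
$$
\Bigl(\int_{\p B_R(0)}\bigl(|x|^{-2\a}e^{\psi}\bigr)^{1/2}\,d\s\Bigr)^2 \ \ge\ \tfrac12\,m(R)\bigl(8\pi(1-\a)-m(R)\bigr).
$$
On the other hand $U_{\l_i,\a}$ solves \eqref{eq:U}, so \eqref{est-rad} holds with \emph{equality} for it on every ball; tracing Proposition \ref{rad} (or computing directly from $M$) gives
$$
\Bigl(\int_{\p B_R(0)}\bigl(|x|^{-2\a}e^{U_{\l_i,\a}}\bigr)^{1/2}\,d\s\Bigr)^2 = \tfrac12\,M(\l_i)\bigl(8\pi(1-\a)-M(\l_i)\bigr),\qquad i=1,2.
$$
The decisive observation is that the boundary datum is shared: since $\psi=U_{\l_1,\a}=U_{\l_2,\a}$ on $\p B_R(0)$ and $|x|\equiv R$ there, the three boundary integrals above all coincide; I denote their common value by $\Phi_R$.

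It now suffices to examine the downward parabola $P(t)=\tfrac12\, t\bigl(8\pi(1-\a)-t\bigr)$. The two equalities say $P(M(\l_1))=P(M(\l_2))=\Phi_R^2$ with $M(\l_1)<M(\l_2)$, so $M(\l_1)$ and $M(\l_2)$ are precisely the two roots of $P(t)=\Phi_R^2$ (consistently with $M(\l_1)+M(\l_2)=8\pi(1-\a)$, i.e. with their symmetry about the vertex). The Alexandrov--Bol inequality for $\psi$ reads $P(m(R))\le \Phi_R^2$, and since $P$ lies strictly above $\Phi_R^2$ on the open interval $(M(\l_1),M(\l_2))$, the nonnegative number $m(R)$ must lie outside it: either $m(R)\le M(\l_1)$ or $m(R)\ge M(\l_2)$, which is exactly the asserted alternative. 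When \eqref{estim} fails to be an identity on $(0,R)$, the strict form of Proposition \ref{rad} upgrades the last inequality to $P(m(R))<\Phi_R^2$, forcing $m(R)<M(\l_1)$ or $m(R)>M(\l_2)$.

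The only genuinely non-mechanical step — and hence the main obstacle — is recognizing that the \emph{pointwise} boundary matching $\psi=U_{\l_i,\a}$ on $\p B_R(0)$ controls the boundary integrals $\Phi_R$ (and not the masses directly), and that the algebraic constraint $\l_1\l_2=8/R^{2(1-\a)}$ forced by that matching is exactly what places $M(\l_1)$ and $M(\l_2)$ symmetrically about the vertex of $P$, making them its two roots at height $\Phi_R^2$. Once this is in place the whole dichotomy is just the geometry of a parabola, and the remaining computations are the routine integration giving $M(\l)$ and the verification that $U_{\l_i,\a}$ saturates \eqref{est-rad}.
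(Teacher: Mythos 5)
Your proposal is correct and follows essentially the same route as the paper: a common boundary integral $\Phi_R$ for $\psi$, $U_{\l_1,\a}$, $U_{\l_2,\a}$, the identification of $M(\l_1),M(\l_2)$ as the two roots of the downward parabola $P(t)=\tfrac12 t(8\pi(1-\a)-t)$ at height $\Phi_R^2$, and Proposition \ref{rad} forcing $m(R)$ outside the interval between them. The only cosmetic differences are that you verify $P(M(\l_i))=\Phi_R^2$ by direct computation (where the paper invokes the equality case of Proposition \ref{bol}) and you obtain the sum identity via $T_1T_2=1$ rather than Vieta's formulas for the quadratic in $\l$.
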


\begin{proof}
Let us set,
\begin{align*}
	& m_i= \int_{B_R(0)} |x|^{-2\a}e^{U_{\l_i,\a}}\,dx \quad i=1,2, \\
	& m= \int_{B_R(0)} |x|^{-2\a}e^{\psi}\,dx,
\end{align*}
and, recalling that $\psi=U_{\l_1,\a}=U_{\l_2,\a}$ on $\p B_R(0)$,
\begin{align*}
	\b &=\left( \int_{\p B_R(0)} \left(|x|^{-2\a}e^{\psi}\right)^{\frac 12}\,d\s \right)^2 \\
		& =\left( \int_{\p B_R(0)} \left(|x|^{-2\a}e^{U_{\l_1,\a}}\right)^{\frac 12}\,d\s \right)^2 = \left( \int_{\p B_R(0)} \left(|x|^{-2\a}e^{U_{\l_2,\a}}\right)^{\frac 12}\,d\s \right)^2.
\end{align*}
By Proposition \ref{rad} we know that,
\begin{equation} \label{dis-m}
	\b\geq\frac 12 m\bigr(8\pi(1-\a)-m\bigr),
\end{equation}
and by Proposition \ref{bol} and Remark \ref{uniqueAlB} we have,
$$
	\b=\frac 12 m_1\bigr(8\pi(1-\a)-m_1\bigr)=\frac 12 m_2\bigr(8\pi(1-\a)-m_2\bigr).
$$
It follows that $m_1$, $m_2$ are the roots of the following equation:
$$
	y^2-8\pi(1-\a)y+2\b=0.
$$
On the other hand, by \eqref{dis-m} we have,
$$
	m^2-8\pi(1-\a)m+2\b\geq 0.
$$
Then, either $m\leq m_1$ or $m\geq m_2$ which proves 
the alternative of Lemma \ref{comp}. It is clear that if $\int_{\p B_r(0)} |\n\psi| \,d\s \not\equiv \int_{B_r(0)} |x|^{-2\a}e^{\psi}\,dx $ in $(0,R)$, 
then the latter inequalities are strict.
\medskip

We are left with the last equality of Lemma \ref{comp}. This is a standard evaluation and we derive it here for the sake of completeness. First of all 
we have,
\begin{equation} \label{integral U}
	\int_{B_R(0)} |x|^{-2\a}e^{U_{\l_i,\a}}\,dx = 8\pi(1-\a) \dfrac{\l_i^2 R^{2(1-\a)}}{8+\l_i^2 R^{2(1-\a)}}\, , \quad i=1,2.
\end{equation}
Since $U_{\l_1,\a}=U_{\l_2,\a}$ on $\p B_R(0)$ we also find that,
$$
	\dfrac{ \l_1 }{ 1+\frac{\l_1^2}{8}R^{2(1-\a)}}= \dfrac{ \l_2 }{ 1+\frac{\l_2^2}{8}R^{2(1-\a)}}=C,
$$
for some $C>0$. It follows that $\l_1$, $\l_2$ are the roots of the following equation:
\begin{equation} \label{U1}
	y^2-\dfrac{8}{C R^{2(1-\a)}} y+\dfrac{8}{R^{2(1-\a)}}=0, 
\end{equation}
and thus, 
\begin{equation} \label{U2}
\l_1+\l_2=\dfrac{8}{C R^{2(1-\a)}}\,, \qquad \l_1\l_2=\dfrac{8}{R^{2(1-\a)}}\,.
\end{equation}
By using \eqref{U1} and then \eqref{U2} we conclude that,
\begin{align*}
 \int_{B_R(0)} \left(|x|^{-2\a}e^{U_{\l_1,\a}}+|x|^{-2\a}e^{U_{\l_2,\a}} \right)\,dx & = 8\pi(1-\a)\left(\dfrac{\l_1^2 R^{2(1-\a)}}{8+\l_1^2 R^{2(1-\a)}} + \dfrac{\l_2^2 R^{2(1-\a)}}{8+\l_2^2 R^{2(1-\a)}} \right) \\
		& = 8\pi(1-\a)\left(\dfrac{\l_1^2 R^{2(1-\a)}}{ \frac{8\l_1}{C} } + \dfrac{\l_2^2 R^{2(1-\a)}}{ \frac{8\l_2}{C}  } \right) \\
		& = 8\pi(1-\a)\left(\dfrac{C R^{2(1-\a)}}{8}(\l_1+\l_2) \right) \\
		& = 8\pi(1-\a).
\end{align*}
The proof of Lemma \ref{comp} is now complete.
\end{proof}

\medskip

On the other hand, an analogous argument with obvious modifications based on Proposition \ref{bol-rev} yields the following result (see \cite{GM2} for full details).
\begin{lem} \label{comp2}
Let $U_{\l_1,\a}, U_{\l_2,\a}$ be defined as in \eqref{U} with $\l_2>\l_1$ and $\a\in[0,1)$. 
Let $\psi\in C(\R^2\setminus B_R(0))\cap W_{\mbox{\scriptsize \emph{loc}}}^{1,p}(\R^2\setminus B_R(0))$ for some $p>2$,  be a strictly decreasing radial function satisfying, 
$$
	\int_{\p B_r(0)} |\n\psi| \,d\s \leq 8\pi(1-\a)-\int_{\R^2\setminus B_r(0)} |x|^{-2\a}e^{\psi}\,dx \quad \mbox{for a.e. } r\in (R,+\infty)
$$
and $\int_{\R^2\setminus B_R(0)} |x|^{-2\a}e^{\psi}\,dx<8\pi(1-\a)$. Suppose $\psi=U_{\l_1,\a}=U_{\l_2,\a}$ on $\p B_R(0)$. Then,  
$$
	\int_{\R^2\setminus B_R(0)} |x|^{-2\a}e^{U_{\l_2,\a}}\,dx \leq \int_{\R^2\setminus B_R(0)} |x|^{-2\a}e^{\psi}\,dx \leq \int_{\R^2\setminus B_R(0)} |x|^{-2\a}e^{U_{\l_1,\a}}\,dx.
$$	
If $\int_{\p B_r(0)} |\n\psi| \,d\s \not\equiv \int_{\R^2\setminus B_r(0)} |x|^{-2\a}e^{\psi}\,dx $ in $(R,+\infty)$, then the above inequalities are strict.
\end{lem}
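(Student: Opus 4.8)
The plan is to mirror, step by step, the proof of Lemma~\ref{comp}, replacing the singular Alexandrov--Bol inequality of Proposition~\ref{bol} by its reversed counterpart Proposition~\ref{bol-rev}, and reading all integrals over the exterior domain $\R^2\setminus B_R(0)$. First I would set
$$
M_i=\int_{\R^2\setminus B_R(0)}|x|^{-2\a}e^{U_{\l_i,\a}}\,dx,\quad i=1,2,\qquad M=\int_{\R^2\setminus B_R(0)}|x|^{-2\a}e^{\psi}\,dx,
$$
and, using that $\psi=U_{\l_1,\a}=U_{\l_2,\a}$ on $\p B_R(0)$,
$$
\b=\left(\int_{\p B_R(0)}\bigr(|x|^{-2\a}e^{\psi}\bigr)^{\frac12}\,d\s\right)^2,
$$
which is then the common value of the analogous boundary integrals for $U_{\l_1,\a}$ and $U_{\l_2,\a}$. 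Since $\psi$ satisfies the hypotheses of Proposition~\ref{bol-rev} (in particular the admissibility bound $M<8\pi(1-\a)$ is part of the assumptions), that proposition gives at once
$$
\b\leq\tfrac12\,M\bigr(8\pi(1-\a)-M\bigr),\qquad\mbox{i.e.}\qquad M^2-8\pi(1-\a)M+2\b\leq 0 .
$$

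Next I would show that the two explicit profiles $U_{\l_i,\a}$ \emph{saturate} Proposition~\ref{bol-rev}, so that $\b=\tfrac12 M_i(8\pi(1-\a)-M_i)$ for $i=1,2$. This is the point where the reversed setting genuinely differs from Lemma~\ref{comp}: there the equality case of Proposition~\ref{bol} was quoted directly, whereas here I expect the main work to be checking that each $U_{\l_i,\a}$ realizes equality in \eqref{est-rev}. By the strictness clause of Proposition~\ref{bol-rev} it suffices to turn \eqref{est-rev} into an identity for a.e.\ $r>R$, which I would obtain from a Gauss--Green computation: integrating \eqref{eq:U} over the annulus $B_\rho(0)\setminus B_r(0)$ yields $\int_{\p B_\rho}\p_r U_{\l_i,\a}\,d\s-\int_{\p B_r}\p_r U_{\l_i,\a}\,d\s=-\int_{B_\rho\setminus B_r}|x|^{-2\a}e^{U_{\l_i,\a}}\,dx$; letting $\rho\to+\infty$ and using the logarithmic decay $U_{\l_i,\a}\sim-4(1-\a)\ln|x|$, which forces the boundary flux at infinity to equal $-8\pi(1-\a)$, together with $|\n U_{\l_i,\a}|=-\p_r U_{\l_i,\a}$ (as $U_{\l_i,\a}$ is radially decreasing), gives exactly $\int_{\p B_r}|\n U_{\l_i,\a}|\,d\s=8\pi(1-\a)-\int_{\R^2\setminus B_r}|x|^{-2\a}e^{U_{\l_i,\a}}\,dx$. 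This flux-at-infinity control is the only delicate step; the rest is bookkeeping identical to Lemma~\ref{comp}.

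Granting the saturation, $M_1$ and $M_2$ are precisely the two roots of $y^2-8\pi(1-\a)y+2\b=0$, while the displayed inequality forces $M^2-8\pi(1-\a)M+2\b\leq 0$, so $M$ lies between those two roots. To fix the ordering I would use that the total mass over $\R^2$ equals $8\pi(1-\a)$ (letting $R\to+\infty$ in \eqref{integral U}), whence
$$
M_i=8\pi(1-\a)-\int_{B_R(0)}|x|^{-2\a}e^{U_{\l_i,\a}}\,dx=8\pi(1-\a)\,\frac{8}{8+\l_i^2 R^{2(1-\a)}} .
$$
Since $\l_2>\l_1$ this gives $M_2<M_1$, and therefore $M_2\leq M\leq M_1$, which is exactly the claimed chain of inequalities. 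Finally, if \eqref{est-rev} fails to be an identity, the strictness part of Proposition~\ref{bol-rev} makes $\b\leq\tfrac12 M(8\pi(1-\a)-M)$ strict, so $M$ lies strictly between $M_2$ and $M_1$ and all the inequalities become strict, completing the proof.
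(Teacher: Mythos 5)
Your proposal is correct and follows exactly the route the paper intends: it mirrors the proof of Lemma~\ref{comp} with Proposition~\ref{bol-rev} in place of Proposition~\ref{bol}, which is all the paper itself says (deferring details to \cite{GM2}). Your Gauss--Green verification that the $U_{\l_i,\a}$ saturate \eqref{est-rev} is sound (and could even be shortcut by combining the equality case of Proposition~\ref{bol} on $B_R(0)$ with the total mass identity $\int_{\R^2}|x|^{-2\a}e^{U_{\l_i,\a}}\,dx=8\pi(1-\a)$), and the root-ordering argument correctly yields $M_2\leq M\leq M_1$.
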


\medskip

Finally, let us introduce some known facts about weighted symmetric rearrangements with respect to two measures: 
under the assumptions of Proposition \ref{bol}, for a given function $\phi\in C(\ov{\O})\cap W^{1,p}(\O)$ for some $p>2$, such that $\phi= C$ on $\p\o$, $\o\subseteq\O$, 
we will consider its equimeasurable rearrangement in $\o$ with respect to the measures $h(x)e^u \,dx$ 
and $|x|^{-2\a}e^{U_{\l,\a}}\,dx$, where $\a=\a(\o,h,f)$ and $U_{\l,\a}$ are defined in \eqref{a}, \eqref{U}, and $u, h, f$ 
are given in Proposition \ref{bol}. More exactly, for $t>t_0=\min_{x\in\ov\o}\phi(x)$ let 
\begin{align} \label{sets}
\begin{split}
	& \{\phi=t\} = \bigr\{ x\in\o \,:\, \phi(x)=t \bigr\}\subseteq\o, \\
	& \o_t = \bigr\{ x\in\o \,:\, \phi(x)>t \bigr\}\subseteq\o,
\end{split}	
\end{align}
and let $\mathcal B_t^*$ be the ball centered at the origin such that
$$
	\int_{\mathcal B_t^*} |x|^{-2\a}e^{U_{\l,\a}}\,dx = \int_{\o_t} h(x)e^u\,dx.
$$
Then, $\phi^*:\mathcal B_{t_0}^*\to\R$ defined by $\phi^*(x)=\sup\bigr\{t\in\R\,:\,x\in\mathcal B_t^*\bigr\} $ is a radial, decreasing, 
equimeasurable rearrangement of $\phi$ with respect to the measures $h(x)e^u \,dx$ and $|x|^{-2\a}e^{U_{\l,\a}}\,dx$, i.e. 
$\{\phi^*>t\}\equiv \mathcal B_t^*$ and,
\begin{equation} \label{equi}
	\int_{\{\phi^*>t\}} |x|^{-2\a}e^{U_{\l,\a}}\,dx = \int_{\o_t} h(x)e^u\,dx,
\end{equation}
for all $t>\min_{x\in\ov\o}\phi(x)$. Elementary arguments show that $\phi^*$ is a BV function. By exploiting Proposition~\ref{bol} we get the following property.
\begin{lem} \label{rearr}
Under the assumptions of Proposition \ref{bol}, let $\phi\in C(\ov{\O})\cap W^{1,p}(\O)$ for some $p>2$ be such that $\phi = C$ on $\p\o$, $\o\subseteq\O$. 
If $\phi^*$ is the equimeasurable symmetric rearrangement of $\phi$ in $\o$ with respect to the measures 
$h(x)e^u \,dx$ and $|x|^{-2\a}e^{U_{\l,\a}}\,dx$ defined above with $\a=\a(\o,h,f)$, then
$$
\int_{\{\phi^*=t\}} |\n \phi^*|\,d\s \leq\int_{\{\phi=t\}} |\n \phi|\,d\s,
$$
for a.e. $t>\min_{x\in\ov\o}\phi(x)$.
\end{lem}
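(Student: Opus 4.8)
The plan is to adapt the classical rearrangement scheme of \cite{GM1, suz} to the present singular setting, combining the co-area formula, the Cauchy--Schwarz inequality and the singular Alexandrov--Bol inequality of Proposition \ref{bol}, while keeping track of the singular exponent $\alpha$ through the monotonicity of the map $\omega\mapsto\alpha(\omega)$. First I would introduce the common distribution function
\[
	m(t)=\int_{\omega_t} h(x)e^{u}\,dx=\int_{\{\phi^*>t\}}|x|^{-2\alpha}e^{U_{\lambda,\alpha}}\,dx,
\]
the two expressions being equal by the very definition \eqref{equi} of the equimeasurable rearrangement. Since the total mass of $|x|^{-2\alpha}e^{U_{\lambda,\alpha}}$ on $\R^2$ is $8\pi(1-\alpha)$ (let $R\to+\infty$ in \eqref{integral U}), the rearrangement is well defined and $0<m(t)<8\pi(1-\alpha)$ for every admissible $t$. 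For a.e.\ regular value $t>t_0=\min_{\ov{\omega}}\phi$ the co-area formula then yields
\[
	-m'(t)=\int_{\{\phi=t\}}\frac{h(x)e^{u}}{|\nabla\phi|}\,d\sigma=\int_{\{\phi^*=t\}}\frac{|x|^{-2\alpha}e^{U_{\lambda,\alpha}}}{|\nabla\phi^*|}\,d\sigma,
\]
where the two integrals coincide because $m$ is literally the same function on both sides, and $-m'(t)>0$ since $he^{u}>0$ and $|\nabla\phi|\neq 0$ on a regular level set.

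On the original side I would apply the Cauchy--Schwarz inequality over $\{\phi=t\}$ to get
\[
	\left(\int_{\{\phi=t\}}\bigl(he^{u}\bigr)^{\frac12}\,d\sigma\right)^{2}\leq\bigl(-m'(t)\bigr)\int_{\{\phi=t\}}|\nabla\phi|\,d\sigma,
\]
and then bound the left-hand side from below by Proposition \ref{bol} applied to the super-level set $\omega_t=\{\phi>t\}$, whose boundary is $\{\phi=t\}$ for a.e.\ $t$:
\[
	\left(\int_{\partial\omega_t}\bigl(he^{u}\bigr)^{\frac12}\,d\sigma\right)^{2}\geq\tfrac12\,m(t)\bigl(8\pi(1-\alpha(\omega_t))-m(t)\bigr).
\]
The key singular ingredient enters here: since $\omega_t\subseteq\omega$ forces $\wtilde{\omega_t}\subseteq\wtilde{\omega}$ and $\mu_+\geq 0$, one has the monotonicity $\alpha(\omega_t)\leq\alpha(\omega)=\alpha$, so that $8\pi(1-\alpha(\omega_t))-m(t)\geq 8\pi(1-\alpha)-m(t)>0$. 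Combining the last three displays gives
\[
	\int_{\{\phi=t\}}|\nabla\phi|\,d\sigma\geq\frac{\tfrac12\,m(t)\bigl(8\pi(1-\alpha)-m(t)\bigr)}{-m'(t)}.
\]

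Finally I would evaluate the right-hand side exactly on the radial rearrangement. Being radial, $\phi^*$ turns the Cauchy--Schwarz step into an equality, while the model profile $U_{\lambda,\alpha}$ saturates Proposition \ref{bol} on each ball $\mathcal B_t^*=\{\phi^*>t\}$ centred at the origin, where $\alpha(\mathcal B_t^*)=\alpha$ because the whole Dirac mass sits at $0$; hence
\[
	\int_{\{\phi^*=t\}}|\nabla\phi^*|\,d\sigma=\frac{\tfrac12\,m(t)\bigl(8\pi(1-\alpha)-m(t)\bigr)}{-m'(t)}.
\]
Comparing this with the previous display proves the claim for a.e.\ $t>t_0$. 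The heart of the argument is classical, so the main obstacle I anticipate is the low-regularity bookkeeping rather than the chain of inequalities: one must justify the co-area formula and the a.e.\ identities when $\phi\in W^{1,p}$ only and $u$ is singular at $x_0$, control the critical levels where $\nabla\phi$ vanishes on a set of positive measure, and allow $\omega_t$ to be disconnected or multiply-connected (a case in which Proposition \ref{bol} still applies, only with strict inequality). The genuinely new point with respect to the non-singular situation is exactly the monotonicity $\alpha(\omega_t)\leq\alpha$, which is what permits replacing the $\alpha(\omega_t)$-Alexandrov--Bol bound on $\omega_t$ by the one with the fixed exponent $\alpha$ used to build the rearrangement.
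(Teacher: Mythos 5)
Your argument is correct and follows essentially the same route as the paper's proof: Cauchy--Schwarz on the level set, the co-area formula, the singular Alexandrov--Bol inequality of Proposition \ref{bol} applied to $\o_t$, and then the reversal of both steps on the radial rearrangement, where the model profile $|x|^{-2\a}e^{U_{\l,\a}}$ saturates equality. The only visible difference is that you make explicit the monotonicity $\a(\o_t)\leq\a(\o)$, a step the paper uses silently when it writes $\a(\o)$ rather than $\a(\o_t)$ in the Alexandrov--Bol bound.
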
 

\begin{proof}
The argument is standard so we will be sketchy and refer to \cite{bc, bc2, bl, CCL, GM1} for full details. 
We first apply the Cauchy-Schwarz inequality and then the co-area formula to get that, 
\begin{align*}
	\int_{\{\phi=t\}} |\n \phi|\,d\s & \geq \left( \int_{\{\phi=t\}} \left(h(x)e^{u}\right)^{\frac 12}\,d\s \right)^2 \left( \int_{\{\phi=t\}} \dfrac{h(x)e^u}{|\n \phi|}\,d\s \right)^{-1} \\
	  & = \left( \int_{\{\phi=t\}} \left(h(x)e^{u}\right)^{\frac 12}\,d\s \right)^2 \left( -\dfrac{d}{dt}\int_{\o_t} h(x)e^u\,dx \right)^{-1},
\end{align*}
for a.e. $t$. Then, in view of the Alexandrov-Bol inequality of Proposition \ref{bol} we see that,
\begin{align*}
	&\left( \int_{\{\phi=t\}} \left(h(x)e^{u}\right)^{\frac 12}\,d\s \right)^2 \left( -\dfrac{d}{dt}\int_{\o_t} h(x)e^u\,dx \right)^{-1} \\
	&\geq \frac 12 \left( \int_{\o_t} h(x)e^u\,dx \right)\left( 8\pi(1-\a(\o))-\int_{\o_t} h(x)e^u\,dx \right) \left( -\dfrac{d}{dt}\int_{\o_t} h(x)e^u\,dx \right)^{-1}.
\end{align*}
Since $\phi^*$ is an equimeasurable rearrangement of $\phi$ in $\o$ with respect to the measures $h(x)e^u \,dx$, $|x|^{-2\a}e^{U_{\l,\a}}\,dx$, and since 
$|x|^{-2\a}e^{U_{\l,\a}}$ realizes the equality in Proposition~\ref{bol},
then we can argue in the other way around, 
\begin{align*}
&\frac 12 \left( \int_{\o_t} h(x)e^u\,dx \right)\left( 8\pi(1-\a(\o))-\int_{\o_t} h(x)e^u\,dx \right) \left( -\dfrac{d}{dt}\int_{\o_t} h(x)e^u\,dx \right)^{-1} \\
&=\frac 12 \left( \int_{\mathcal B_t^*} |x|^{-2\a}e^{U_{\l,\a}}\,dx \right)\left( 8\pi(1-\a(\o))-\int_{\mathcal B_t^*} |x|^{-2\a}e^{U_{\l,\a}}\,dx \right) \left( -\dfrac{d}{dt}\int_{\mathcal B_t^*} |x|^{-2\a}e^{U_{\l,\a}}\,dx \right)^{-1} \\
& =\left( \int_{\{\phi^*=t\}} \left(|x|^{-2\a}e^{U_{\l,\a}}\right)^{\frac 12}\,d\s \right)^2 \left( -\dfrac{d}{dt}\int_{\mathcal B_t^*} |x|^{-2\a}e^{U_{\l,\a}}\,dx \right)^{-1} \\
& = \int_{\{\phi^*=t\}} |\n \phi^*|\,d\s,
\end{align*}
as claimed, where in the last equality we used the co-area formula for BV functions, see \cite{fr}. 
\end{proof}

\medskip

At this point we are ready to derive the main result of this section, namely the singular Sphere Covering Inequality.

\medskip

\begin{proof}[Proof of Theorem \ref{thm:ineq}]
Let $u_1, u_2$ and $\o\subseteq \O$ be as in the statement of Theorem~\ref{thm:ineq}. 
Let $\a=\a(\o,h,f_1)$, where $\a(\o,h,f_1)$ is given in \eqref{a}, and let $U_{\l_1,\a}, U_{\l_2,\a}$ 
be as defined in \eqref{U} for some $\l_2>\l_1$.  
Take $\l_1, \l_2$ such that $U_{\l_1,\a}=U_{\l_2,\a}$ on $\p B_1(0)$ and,
$$
	\int_{\o} h(x)e^{u_1}\,dx=\int_{B_1(0)} |x|^{-2\a}e^{U_{\l_1,\a}}\,dx.
$$
Let $\phi=u_2-u_1$ and let us assume without loss of generality that,
$$	
\left\{ \begin{array}{ll}
\phi>0 & \mbox{in } \o, \vspace{0.2cm}\\
\phi=0 & \mbox{on } \p \o.
\end{array}
\right.
$$ 
Since $u_1$ satisfies, 
$$
	\D u_1+h(x)e^{u_1}=f_1(x) \mbox{ in } \O,
$$
\sloppy{and $\O$ is simply-connected, then, as in the discussion after Lemma \ref{comp2},
we can define the radial, decreasing, equimeasurable rearrangement $\phi^*$  of 
$\phi$ in $\o$ with respect to the two measures $h(x)e^{u_1} \,dx$ and $|x|^{-2\a}e^{U_{\l_1,\a}}\,dx$. 
Let $\{\phi=t\}$ and $\o_t \subseteq\o$ be defined as in \eqref{sets}. In particular we have,} 
$$
	\int_{\{\phi^*>t\}} |x|^{-2\a}e^{U_{\l_1,\a}}\,dx = \int_{\o_t} h(x)e^{u_1}\,dx,
$$
for $t \ge 0$. Observe now that due to the assumption $f_2\geq f_1$ a.e. in $\O$, we also have,
\begin{equation} \label{diff}
	\D(u_2-u_1)+h(x)e^{u_2}-h(x)e^{u_1}=f_2(x)-f_1(x)\geq 0   \mbox{ a.e. in } \O.
\end{equation}
We first estimate the gradient of the rearrangement by applying Lemma \ref{rearr} and then use equation \eqref{diff} to obtain,  
\begin{align*}
	\int_{\{ \phi^*=t \}} |\n\phi^*|\,d\s & \leq \int_{\{ \phi=t \}} |\n(u_2-u_1)|\,d\s \\
	& \leq \int_{\o_t} \bigr( h(x)e^{u_2}-h(x)e^{u_1} \bigr)\,dx,
\end{align*}
for a.e. $t>0$. By using the equimeasurablility with the Fubini theorem and then also the equation \eqref{eq:U} satisfied by $U_{\l_1,\a}$, we deduce that,
\begin{align*}
	\int_{\o_t} \bigr( h(x)e^{u_2}-h(x)e^{u_1} \bigr)\,dx & = \int_{\{ \phi^*>t \}} |x|^{-2\a}e^{U_{\l_1,\a}+\phi^*}\,dx - \int_{\{ \phi^*>t \}} |x|^{-2\a}e^{U_{\l_1,\a}}\,dx \\
		& = \int_{\{ \phi^*>t \}} |x|^{-2\a}e^{U_{\l_1,\a}+\phi^*}\,dx - \int_{\{ \phi^*=t \}} |\n{U_{\l_1,\a}}|\,d\s,
\end{align*}
for a.e. $t>0$. Therefore, we conclude that,
$$
	\int_{\{ \phi^*=t \}} |\n\bigr(U_{\l_1,\a}+\phi^*\bigr)|\,d\s \leq\int_{\{ \phi^*>t \}} |x|^{-2\a}e^{U_{\l_1,\a}+\phi^*}\,dx,
$$
for a.e. $t>0$. Let $\psi=U_{\l_1,\a}+\phi^*$. Since $\phi^*$ is decreasing by construction  then $\psi$ is a strictly decreasing function. 
Moreover, the above estimate can be rewritten as,
\begin{equation} \label{estimate}
	\int_{\p B_r(0)} |\n\psi|\,d\s \leq \int_{B_r(0)} |x|^{-2\a}e^{\psi}\,dx \quad \mbox{for a.e. } r>0,
\end{equation}
which shows that $\psi\in W^{1,p}(B_1(0))$ for some $p>2$. 
Furthermore, recalling that $\phi> 0$ in $\o$ we have $\phi^*\geq 0$, $\phi^*\not\equiv 0$, which implies that,
$$
	\int_{B_1(0)} |x|^{-2\a}e^{\psi}\,dx=\int_{B_1(0)} |x|^{-2\a}e^{U_{\l_1,\a}+\phi^*}\,dx > \int_{B_1(0)} |x|^{-2\a}e^{U_{\l_1,\a}}\,dx.
$$
Observing that $\phi=0$ on $\p\o$ we have $\phi^*= 0$ on $\p B_1(0)$ and $\psi=U_{\l_1,\a}=U_{\l_2,\a}$ on $\p B_1(0)$. 
By \eqref{estimate} and the above estimate  we can exploit the alternative of Lemma~\ref{comp} about $\psi$, to obtain,
$$
	\int_{B_1(0)} |x|^{-2\a}e^{U_{\l_1,\a}+\phi^*}\,dx = \int_{B_1(0)} |x|^{-2\a}e^{\psi}\,dx > \int_{B_1(0)} |x|^{-2\a}e^{U_{\l_2,\a}}\,dx.
$$
Therefore, by using Lemma \ref{comp} once more, we conclude that,
\begin{align} \label{int}
\begin{split}
\int_{\o}\bigr( h(x)e^{u_1}+h(x)e^{u_2} \bigr)\,dx & =	\int_{B_1(0)} \left( |x|^{-2\a}e^{U_{\l_1,\a}}+|x|^{-2\a}e^{U_{\l_1,\a}+\phi^*} \right)\,dx \\
& \geq \int_{B_1(0)} \left( |x|^{-2\a}e^{U_{\l_1,\a}}+|x|^{-2\a}e^{U_{\l_2,\a}} \right)\,dx \\
&= 8\pi(1-\a).
\end{split}
\end{align}
Moreover, going back through the proof, we see that the equality 
holds only if we have equality in \eqref{diff}, i.e. $f_1\equiv f_2:=f$, and in \eqref{estimate} for a.e. $r>0$. 
It follows that in Lemma \ref{rearr} we have equality in the estimate of the gradient of the rearrangement 
and hence the equality in the Alexandrov-Bol inequality of Proposition \ref{bol} for $u_1$ in $\O$. 
Therefore, (modulo conformal transformations) $\o=B_\d(0)$ for some $\d>0$, $h(x) e^{u_1}\equiv |x|^{-2\a}e^{U_{\wtilde\l_1,\a}}$ 
for some $\wtilde \l_1>0$  and  
\begin{equation}\label{Dh+f}
\mu_+=-\Delta H(x) -f(x)=4\pi \a \delta_{p=0}  \mbox{ in }B_\d(0),
\end{equation}
where we recall that $h=e^H$. In particular we have 
\begin{equation}\label{same}
U_{\wtilde\l_1,\a}\equiv u_1+2\a\ln|x|+H(x).
\end{equation}
Since $\o=B_\d(0)$, and by setting $w_2=u_2+2\a\ln|x|+H(x)$, then \eqref{eq2} and \eqref{Dh+f} imply that $w_2$ satisfies,
$$
	\D w_2+|x|^{-2\a}e^{w_2}=0  \mbox{ in } B_\d(0).
$$
Since $u_2=u_1$ on $\p B_\d(0)$ by assumption, then \eqref{same} implies that $w_2=U_{\wtilde\l_1,\a}$ on $\p B_\d(0)$. Furthermore, observe that, 
as a consequence of the equality sign in \eqref{int}, we have 
$$
	\int_{B_\d(0)} |x|^{-2\a}e^{w_2}\,dx =	\int_{B_\d(0)} h(x)e^{u_2}\,dx \equiv \int_{B_1(0)} |x|^{-2\a}e^{U_{\l_2,\a}}\,dx,
$$
which is already known by \eqref{integral U}. It is then not difficult to see that $w_2\equiv U_{\wtilde\l_2,\a}$ 
for some $\wtilde \l_2>\wtilde \l_1$, i.e.  $h(x) e^{u_2}\equiv |x|^{-2\a}e^{U_{\wtilde\l_2,\a}}$, as claimed. The proof is complete. 
\end{proof}

\

\section{The Singular Sphere Covering Inequality with same total mass.} \label{sec:domain}

\medskip

In this section we will deduce the improved singular Sphere Covering Inequality of Theorem \ref{thm:ineq2} under the same total mass condition, i.e. \eqref{uguale}. This is done in the spirit of \cite{GM3}.

\medskip
	
First of all, let us derive the following lemma which is a simple consequence of the radial Alexandrov-Bol inequality in Proposition \ref{rad}.
\begin{lem} \label{boundary}
Let $U_{\l,\a}$ be defined as in \eqref{U} with $\l>0$ and $\a\in[0,1)$. 
Let $\psi\in C(\ov{B_R(0)})\cap W^{1,p}(B_R(0))$ for some $p>2$,  be a strictly decreasing radial function satisfying, 
$$
	\int_{\p B_r(0)} |\n\psi| \,d\s \leq \int_{B_r(0)} |x|^{-2\a}e^{\psi}\,dx \quad \mbox{for a.e. } r\in (0,R).
$$
Suppose 
$$
	\int_{B_R(0)} |x|^{-2\a}e^{\psi}\,dx=\int_{B_R(0)} |x|^{-2\a}e^{U_{\l,\a}}\,dx<8\pi(1-\a).
$$

\smallskip

Then $U_{\l,\a}(R)\leq \psi(R)$.
\end{lem}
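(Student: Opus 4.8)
The plan is to read off both boundary values from the radial Alexandrov--Bol inequality of Proposition~\ref{rad}, exploiting that $U_{\l,\a}$ realizes \emph{equality} there while $\psi$ only satisfies it as an inequality, together with the fact that the two functions enclose the same mass. Throughout I set
$$
M=\int_{B_R(0)}|x|^{-2\a}e^{\psi}\,dx=\int_{B_R(0)}|x|^{-2\a}e^{U_{\l,\a}}\,dx, \qquad \mu_R=\int_{B_R(0)}|x|^{-2\a}\,dx=\frac{\pi}{1-\a}\,R^{2(1-\a)}.
$$
The key observation is that, since both $\psi$ and $U_{\l,\a}$ are strictly decreasing and radial, each is constant on $\p B_R(0)$ and the super-level set at its boundary value is exactly $B_R(0)$, so the two functions share the same boundary circle and the same value of $\mu_R$.

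First I would apply Proposition~\ref{rad} to $\psi$. Because $\psi\equiv\psi(R)$ on $\p B_R(0)$, one has $\int_{\p B_R(0)}(|x|^{-2\a}e^{\psi})^{1/2}\,d\s=e^{\psi(R)/2}\int_{\p B_R(0)}|x|^{-\a}\,d\s=e^{\psi(R)/2}\,2\pi R^{1-\a}$, so the left-hand side of the inequality in Proposition~\ref{rad} equals $e^{\psi(R)}\,4\pi^2R^{2(1-\a)}=4\pi(1-\a)\,e^{\psi(R)}\mu_R$. Dividing the whole inequality by the common factor $4\pi(1-\a)$ brings it to the normalized form
$$
e^{\psi(R)}\mu_R \;\geq\; M-\frac{M^2}{8\pi(1-\a)}.
$$
The identical computation applies verbatim to $U_{\l,\a}$, which has the same boundary circle and the same mass $M$; the only difference is that $U_{\l,\a}$ satisfies \eqref{est-rad} with equality, since integrating $\D U_{\l,\a}=-|x|^{-2\a}e^{U_{\l,\a}}$ over $B_r(0)$ and using radial monotonicity gives $\int_{\p B_r(0)}|\n U_{\l,\a}|\,d\s=\int_{B_r(0)}|x|^{-2\a}e^{U_{\l,\a}}\,dx$ for every $r\in(0,R)$. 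Equivalently, $U_{\l,\a}$ is exactly the equality case of the Alexandrov--Bol inequality already exploited in Lemma~\ref{comp}. Hence Proposition~\ref{rad} holds with equality for $U_{\l,\a}$, yielding
$$
e^{U_{\l,\a}(R)}\mu_R \;=\; M-\frac{M^2}{8\pi(1-\a)}.
$$
Subtracting the two relations gives $e^{\psi(R)}\mu_R\geq e^{U_{\l,\a}(R)}\mu_R$, and since $\mu_R>0$ and the exponential is increasing this is precisely $\psi(R)\geq U_{\l,\a}(R)$, as claimed.

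The only point requiring care, and the place where I would slow down, is justifying the equality case for $U_{\l,\a}$ in the present weak setting: this is where the hypothesis $M<8\pi(1-\a)$ enters (it is automatic for $U_{\l,\a}$, whose total mass on $\R^2$ is $8\pi(1-\a)$). One may either quote the equality discussion of Proposition~\ref{bol} and Lemma~\ref{comp}, or verify the identity directly from the explicit evaluation \eqref{integral U}: writing $s=\l^2R^{2(1-\a)}$ one has $M=8\pi(1-\a)\frac{s}{8+s}$ and $e^{U_{\l,\a}(R)}=\l^2(1-\a)^2\bigl(1+\tfrac{\l^2}{8}R^{2(1-\a)}\bigr)^{-2}$, whereupon $e^{U_{\l,\a}(R)}\mu_R=\frac{64\pi(1-\a)s}{(8+s)^2}=M-\frac{M^2}{8\pi(1-\a)}$ becomes a one-line algebraic check. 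I expect no further obstacle, as the equimeasurability and co-area machinery needed for the inequality on $\psi$ is already packaged inside Proposition~\ref{rad}.
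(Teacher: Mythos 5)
Your proposal is correct and follows essentially the same route as the paper: Proposition~\ref{rad} holds with equality for $U_{\l,\a}$ and as an inequality for $\psi$, the equal-mass hypothesis makes the two right-hand sides coincide, and comparing the (constant) boundary terms yields $U_{\l,\a}(R)\leq\psi(R)$. Your explicit algebraic verification of the equality case for $U_{\l,\a}$ via \eqref{integral U} is a nice touch that the paper leaves implicit, but the argument is the same.
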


\begin{proof}
We start by using the radial Alexandrov-Bol inequality in Proposition \ref{rad} with $U_{\l,\a}$, exploit the assumptions of the lemma and then apply Proposition \ref{rad} once more to $\psi$ to deduce
\begin{align*}
	\left( \int_{\p B_R(0)} \left(|x|^{-2\a}e^{U_{\l,\a}}\right)^{\frac 12}\,d\s \right)^2 &= \frac 12 
\left( \int_{B_R(0)} |x|^{-2\a}e^{U_{\l,\a}}\,dx \right)\left( 8\pi(1-\a)-\int_{B_R(0)}|x|^{-2\a}e^{U_{\l,\a}}\,dx \right) \\
																&=\frac 12 
\left( \int_{B_R(0)} |x|^{-2\a}e^{\psi}\,dx \right)\left( 8\pi(1-\a)-\int_{B_R(0)}|x|^{-2\a}e^{\psi}\,dx \right) \\
															 &\leq \left( \int_{\p B_R(0)} \left(|x|^{-2\a}e^{\psi}\right)^{\frac 12}\,d\s \right)^2.
\end{align*}
Therefore, it readily follows that $U_{\l,\a}(R)\leq \psi(R)$.
\end{proof}

\medskip

We will now prove the main result of this section, Theorem \ref{thm:ineq2}.

\medskip

\noindent \begin{proof}[Proof of Theorem \ref{thm:ineq2}]
The idea is to proceed as in the proof of the singular Sphere Covering Inequality of Theorem \ref{thm:ineq} and exploit the extra information \eqref{uguale}.
Let $\a=\a(\O,h,f_1)>0$ be as defined in \eqref{a}. We discuss two cases separately.

\medskip
{\bf CASE 1}. We argue by contradiction and suppose first that $\rho<8\pi(1-\a)$. Take $\l_1>0$ and $R>0$ such that
\begin{equation} \label{constr}
	\int_{\O} h(x)e^{u_1}\,dx=\int_{B_R(0)} |x|^{-2\a}e^{U_{\l_1,\a}}\,dx,
\end{equation}
where $U_{\l_1,\a}$ is defined in \eqref{U}. Let $\phi=u_2-u_1$, which, since $u_1\neq u_2$ in $\O$ and in view of \eqref{uguale}, 
obviously changes sign in $\O$. Since $\O$ is simply-connected, then, as right after Lemma \ref{comp}, 
we can define the radial, decreasing, equimeasurable rearrangement $\phi^*$ of $\phi$ in $\O$ with respect to the two measures $h(x)e^{u_1} \,dx$ and $|x|^{-2\a}e^{U_{\l_1,\a}}\,dx$. Moreover, observe that       
\begin{equation} \label{diff2}
	\D(u_2-u_1)+h(x)e^{u_2}-h(x)e^{u_1}=f_2(x)-f_1(x)\geq 0   \mbox{ a.e. in } \O.
\end{equation}
By arguing exactly as in the proof of Theorem \ref{thm:ineq} we deduce that,
$$
	\int_{\{ \phi^*=t \}} |\n\bigr(U_{\l_1,\a}+\phi^*\bigr)|\,d\s \leq\int_{\{ \phi^*>t \}} |x|^{-2\a}e^{U_{\l_1,\a}+\phi^*}\,dx,
$$
for a.e. $t>0$. Let $\psi=U_{\l_1,\a}+\phi^*$. Since $\phi^*$ is decreasing by construction  then $\psi$ is a strictly decreasing function. 
Moreover, the above estimate can be rewritten as,
\begin{equation} \label{prop1}
	\int_{\p B_r(0)} |\n\psi|\,d\s \leq \int_{B_r(0)} |x|^{-2\a}e^{\psi}\,dx \quad \mbox{for a.e. } r>0.
\end{equation}
On the other hand, by \eqref{constr} and \eqref{uguale} we have,
\begin{align*}
	\int_{B_R(0)} |x|^{-2\a}e^{\psi}\,dx & =\int_{B_R(0)} |x|^{-2\a}e^{U_{\l_1,\a}+\phi^*}\,dx=\int_{\O} h(x)e^{u_2}\,dx \\
					&  =\int_{\O} h(x)e^{u_1}\,dx=\int_{B_R(0)} |x|^{-2\a}e^{U_{\l_1,\a}}\,dx.
\end{align*}
Therefore, we observe that,
\begin{equation} \label{prop2}
	\int_{B_R(0)} |x|^{-2\a}e^{\psi}\,dx=\int_{B_R(0)} |x|^{-2\a}e^{U_{\l_1,\a}}\,dx=\int_{\O} h(x)e^{u_1}\,dx=\rho<8\pi(1-\a),
\end{equation}
by assumption. 

\medskip

Once we have \eqref{prop1} and \eqref{prop2} we may apply Lemma \ref{boundary} to get $U_{\l_1,\a}(R)\leq\psi(R)$. However, since $\phi<0$ on a subset of $\O$ with positive measure, then $\phi^*(R)<0$, and thus,
$$
	\psi(R)=U_{\l_1,\a}(R)+\phi^*(R)<U_{\l_1,\a}(R),
$$
which is the desired contradiction.

\medskip
{\bf CASE 2}. Next we suppose that $\rho=8\pi(1-\a)$. Fix $\l_1>0$ and observe that
$$
	\int_{\O} h(x)e^{u_1}\,dx=\rho=8\pi(1-\a)=\int_{\R^2} |x|^{-2\a}e^{U_{\l_1,\a}}\,dx.
$$
As above we set $\phi=u_2-u_1$ and consider its equimeasurable rearrangement $\phi^*$ with respect to the two measures 
$h(x)e^{u_1} \,dx$ and $|x|^{-2\a}e^{U_{\l_1,\a}}\,dx$. Reasoning as in CASE 1 we deduce that,
\begin{equation} \label{est:int}
	\int_{\p B_r(0)} |\n\psi|\,d\s \leq \int_{B_r(0)} |x|^{-2\a}e^{\psi}\,dx \quad \mbox{for a.e. } r>0,
\end{equation}	
where $\psi=U_{\l_1,\a}+\phi^*$ and
\begin{equation} \label{est:tot}
	\int_{\R^2} |x|^{-2\a}e^{\psi}\,dx =\int_{\R^2} |x|^{-2\a}e^{U_{\l_1,\a}}\,dx=8\pi(1-\a).
\end{equation}
In particular, we also have,
\begin{equation} \label{est:ext}
	\int_{\p B_r(0)} |\n\psi|\,d\s \leq 8\pi(1-\a) - \int_{\R^2\setminus B_r(0)} |x|^{-2\a}e^{\psi}\,dx \quad \mbox{for a.e. } r>R,
\end{equation}	
where $R>0$ is any fixed number. By the estimate \eqref{est:tot} we see there exists $R>0$ such that $\psi(R)=U_{\l_1,\a}(R)$. 
Take now $\l_2\neq\l_1$ such that $U_{\l_2,\a}(R)=U_{\l_1,\a}(R)=\psi(R)$. Since $\phi^*$ is decreasing, 
by the definition of $\psi$ and $R$ we conclude $\psi\leq U_{\l_1,\a}$ in $\R^2\setminus B_R(0)$, $\psi\not\equiv U_{\l_1,\a}$. It follows that,
$$
	\int_{\R^2\setminus B_R(0)} |x|^{-2\a}e^{\psi}\,dx < \int_{\R^2\setminus B_R(0)} |x|^{-2\a}e^{U_{\l_1,\a}}\,dx.
$$
Since \eqref{est:ext} holds true, then we may apply Lemma \ref{comp2} and deduce on one side that $\l_1<\l_2$ and on the other side that,
\begin{equation} \label{ext}
	\int_{\R^2\setminus B_R(0)} |x|^{-2\a}e^{\psi}\,dx \geq \int_{\R^2\setminus B_R(0)} |x|^{-2\a}e^{U_{\l_2,\a}}\,dx.
\end{equation}
On the other hand, we observe that  $\psi\geq U_{\l_1,\a}$ in $B_R(0)$, $\psi\not\equiv U_{\l_1,\a}$. It follows that,
$$
	\int_{B_R(0)} |x|^{-2\a}e^{\psi}\,dx > \int_{B_R(0)} |x|^{-2\a}e^{U_{\l_1,\a}}\,dx,
$$
and hence, since \eqref{est:int} holds true, Lemma \ref{comp} yields,
\begin{equation} \label{inte}
	\int_{B_R(0)} |x|^{-2\a}e^{\psi}\,dx \geq \int_{B_R(0)} |x|^{-2\a}e^{U_{\l_2,\a}}\,dx.
\end{equation}
Finally, exploiting \eqref{est:tot} and summing up \eqref{ext} and \eqref{inte} we end up with,
\begin{align}
	8\pi(1-\a)&=\int_{\R^2} |x|^{-2\a}e^{\psi}\,dx=\int_{B_R(0)} |x|^{-2\a}e^{\psi}\,dx +\int_{\R^2\setminus B_R(0)} |x|^{-2\a}e^{\psi}\,dx \nonumber\\
						& \geq \int_{\R^2} |x|^{-2\a}e^{U_{\l_2,\a}}\,dx = 8\pi(1-\a). \label{est:eq}
\end{align}
In order to obtain the desired contradiction we are left with showing that the latter inequality is strict. 
We first observe the equality holds only if we have equality in \eqref{diff2}, i.e. $f_1\equiv f_2$. 
Next, we may suppose without loss of generality $u_2-u_1=c\geq 0$ on $\p\O$. 
The equality in \eqref{est:eq} holds if and only if, in particular, we have equality in \eqref{est:int}. 
It follows that in Lemma~\ref{rearr} we have equality in the estimate of the gradient of the rearrangement $\phi^*$ of  $\phi=u_2-u_1$ 
and hence the equality in the Alexandrov-Bol inequality of Proposition \ref{bol} in $\O_t = \bigr\{ x\in\O \,:\, \phi(x)>t \bigr\}$ for a.e. $t>\min_{\ov{\O}}\phi$. 
Therefore, $\O_t$ must be simply-connected for a.e. $t>\min_{\ov\O}\phi$. Recalling that $\phi\geq 0$ on $\p\O$, this is impossible since $\O_t$ 
is not simply-connected for $\min_{\ov\O}\phi<t<0$. We conclude the inequality in \eqref{est:eq} is strict and the proof is completed.
\end{proof}

\

\section{Uniqueness of solutions of the singular Liouville equation on $\S^2$.} \label{sec:mf}

\medskip

In this section we show a first application of the singular Sphere Covering Inequality of Theorem \ref{thm:ineq} 
by proving the uniqueness of solutions of the singular Liouville equation \eqref{eq:mf} on $\S^2$, see Theorem~\ref{thm:mf} and the discussion in subsection \ref{subsec:mf}.

\medskip

\noindent \begin{proof}[Proof of Theorem \ref{thm:mf}]
Suppose by contradiction that there exist $v_1, v_2$ satisfying \eqref{eq:mf} such that $v_1 \not\equiv v_2$. 
As discussed in subsection \ref{subsec:mf} we let $\Pi:\S^2\setminus\{\mathcal N\}\to \R^2$ be the stereographic projection 
with respect to the north pole defined in \eqref{Pi} and we define,
$$
	u_i(x)=v_i(\Pi^{-1}(x))-\ln\left(\int_{\S^2}e^{v_i}\,dV_g\right)-\frac{\rho-4\pi\sum_j \a_j}{4\pi}\ln(1+|x|^2)+\ln(4\rho), \quad x\in\R^2, \ i=1,2,
$$
where we recall that $\a_j\in(-1,0)$ for all $j=1,\dots,N$, $N\geq0$ and $\sum_j \a_j$ stands for $\sum_{j=1}^N \a_j$. Then, $u_1, u_2$ satisfy,
\begin{equation} \label{eq-ui}
	\D u_i +h(x)e^{u_i}=4\pi\sum_{j=1}^N \a_j\d_{q_j} \quad \mbox{on } \R^2, \quad i=1,2,
\end{equation}
for some $\{ q_j,\dots,q_N \}\subset\R^2$, where,
\begin{equation} \label{h1}
	h(x)= (1+|x|^2)^{-l}, \quad l=\frac{4\pi\bigr( 2+\sum_j \a_j \bigr)-\rho}{4\pi}\, ,
\end{equation}
and,
\begin{equation} \label{tot}
	\int_{\R^2} h(x)e^{u_i}\,dx=\rho, \quad i=1,2.
\end{equation}
We have $u_1\not\equiv u_2$
and so there must exist at least two disjoint simply-connected regions $\O_i \subset\R^2$, $i=1,2$ (not necessarily bounded) 
such that there exist $\o_i\subseteq\O_i$, $i=1,2$, such that,
$$	
\left\{ \begin{array}{l}
u_1> u_2 \quad  \mbox{in } \o_1, \qquad  u_2> u_1 \quad  \mbox{in } \o_2, \vspace{0.2cm}\\
u_1=u_2 \quad \mbox{on } \p \o_1\cup\p\o_2.
\end{array}
\right.
$$ 
It will be clear from the proof that in suitable coordinates (after a Kelvin's transformation if necessary), $\O_1$ and $\O_2$ can be assumed to be bounded. 

\begin{rem} \label{reg}
We will not discuss here the regularity of $\o_1, \o_2$ since Theorem~\ref{thm:ineq} applies under 
very weak assumptions on $\o_1, \o_2$ as far as $\o_i\Subset \Omega_i$, $i=1,2$, see Remark \ref{smooth}. 
If $\o_i\not\Subset \Omega_i$ for some $i$, we may always choose a larger and smooth $O_i\subset \R^2$ such that $\o_i\Subset O_i$ and then 
apply Theorem~\ref{thm:ineq} with $\Omega_i$ replaced by $O_i$.
\end{rem}

\medskip

The idea is then to apply the singular Sphere Covering Inequality of Theorem~\ref{thm:ineq} in (at least one of) the $\O_i$. To this end we recall the notation $h=e^H$ and observe that,
\begin{equation} \label{Dh1}
	-\D H(x)= l\frac{4}{(1+|x|^2)^2}=\frac{4\pi\bigr( 2+\sum_j \a_j \bigr)-\rho}{4\pi}\frac{4}{(1+|x|^2)^2}\, .
\end{equation}
For fixed $\o\subseteq\R^2$ we define,
$$
	I(\o)= \dfrac{1}{4\pi}\int_{\o}\frac{4}{(1+|x|^2)^2}\,dx.
$$
Recalling the Definition \ref{def1} of $\wtilde\o$, we then set,
\begin{equation} 	\label{I}
	I_s(\o)=\left\{ \begin{array}{ll}
									I(\o) & \mbox{if $\o$ is simply-connected}, \vspace{0.2cm}\\
									I(\wtilde\o) & \mbox{if $\o$ is not simply-connected}.
								\end{array} \right.	
\end{equation}
We will prove $(i)$ and $(ii)$ of Theorem \ref{thm:mf} separately.

\

\textbf{Proof of $(i)$.} We have $N\geq 0$ and $\rho<4\pi\bigr( 2+\sum_j \a_j \bigr)$. Observe that in this case $-\D H>0$ in $\R^2$. Thus, by the definition of $\a(\o_i)=\a(\o_i,h,f)\geq 0$ in \eqref{a}, for \eqref{eq-ui} we have,
\begin{equation} \label{a_i}
	\a(\o_i)= \frac{4\pi\bigr( 2+\sum_j \a_j \bigr)-\rho}{4\pi}\,I_s(\o_i)-\sum_{j\in J_i} \a_j, \quad i=1,2,
\end{equation}
where   $J_i=\bigr\{j\in\{1,\dots,N\}\,:\, q_j\in \o_i\bigr\}$. Moreover, let us define,
$$
	\a=\a(\o_1)+\a(\o_2).
$$
We point out that obviously,
\begin{equation} \label{i-tot}
	I_s(\R^2)= 1 = \dfrac{|\S^2|}{4\pi}\,.
\end{equation}
Therefore, we have
\begin{align} 
	\a &=\frac{4\pi\bigr( 2+\sum_j \a_j \bigr)-\rho}{4\pi}\bigr(I_s(\o_1)+I_s(\o_2)\bigr)-\sum_{j\in J_1  \cup J_2} \a_j  \nonumber\\
	   &\leq  \frac{4\pi\bigr( 2+\sum_j \a_j \bigr)-\rho}{4\pi}\,I_s(\R^2)-\sum_{j\in J_1\cup J_2} \a_j \nonumber\\
		 &=\frac{4\pi\bigr( 2+\sum_j \a_j \bigr)-\rho}{4\pi}-\sum_{j\in J_1\cup J_2}  \a_j  \nonumber\\
		 &\leq \frac{4\pi\bigr( 2+\sum_j \a_j \bigr)-\rho}{4\pi}-\sum_{j}  \a_j  \nonumber\\		
		 &=\frac{8\pi-\rho}{4\pi}\,, \label{sum}
\end{align}
and then, in particular, it holds, 
\begin{equation} \label{sum2}
	\a<2.
\end{equation}

We claim that
\begin{equation} \label{cl}
	2\rho > 16\pi - 8\pi\a,
\end{equation}
and discuss two cases separately.

\medskip
{\bf CASE 1}. Suppose first that $\a(\o_i)<1$ for all $i=1,2$.
By applying Theorem~\ref{thm:ineq} in $\O_1$ and in $\O_2$ we find that,
\begin{equation} \label{ineq-oi}
		\int_{\o_i} \bigr( h(x)e^{u_1}+h(x)e^{u_2} \bigr)\,dx \geq 8\pi(1-\a(\o_i)), \quad i=1,2.
\end{equation}
Observe that the above inequalities are strict. Indeed, let us focus for example on $i=1$. In view of Theorem~\ref{thm:ineq}, the equality holds if and only if, in suitable coordinates, $\o_1=B_{\d}(0)$ for some $\d>0$ and $h(x)e^{u_i}\equiv |x|^{-2\a}e^{U_{\l_i,\a}}$ for some $\l_i>0$, $i=1,2$,
where $\a=\frac{1}{4\pi}\mu_+(\o_1)$ and $U_{\l_i,\a}$ is defined in \eqref{U}. Moreover, $\mu_+(\o_1)=-\D H-f=4\pi\a\d_{p=0}$. Therefore, we have $u_i+H(x)\equiv U_{\l_i,\a}-2\a\ln|x|$ and in particular
$$
     \D u_i +\D H(x)=\D U_{\l_i,\a}-4\pi\a\d_{p=0},\mbox{ in } B_\d(0).
$$
By using first \eqref{eq:U}, \eqref{eq-ui} and then $h(x)e^{u_i}\equiv |x|^{-2\a}e^{U_{\l_i,\a}}$ we come up with,
$$
      4\pi\sum_{j\in J_1}\a_j \d_{q_j}+\Delta H(x)=-4\pi\a\d_{p=0},\mbox{ in } B_\d(0),
$$
where $J_1$ is defined after \eqref{a_i}. By \eqref{Dh1} we have,
$$
	\frac{4\pi\bigr( 2+\sum_j \a_j \bigr)-\rho}{4\pi}\frac{4}{(1+|x|^2)^2} = 4\pi\a\d_{p=0}+4\pi\sum_{j\in J_1}\a_j \d_{q_j},\mbox{ in } B_\d(0),
$$
which is impossible since we also have $\rho<4\pi\bigr( 2+\sum_j \a_j \bigr)$. We conclude that the inequalities in \eqref{ineq-oi} are strict.

At this point we recall that \eqref{tot} holds true and that $\O_1, \O_2$ are disjoint regions. Therefore, since $\o_i\subseteq\O_i$, $i=1,2,$ by summing up the above inequalities we deduce 
that,
\begin{align*}
	2\rho & = \int_{\R^2} \bigr( h(x)e^{u_1}+h(x)e^{u_2} \bigr)\,dx \geq \sum_{i=1}^2 \int_{\o_i} \bigr( h(x)e^{u_1}+h(x)e^{u_2} \bigr)\,dx \\
				& > 16\pi - 8\pi\bigr(\a(\o_1)+\a(\o_2) \bigr),
\end{align*}
which proves the claim.

\medskip

{\bf CASE 2}. We assume without loss of generality that $\a(\o_1)\geq 1$.
Then, we have
$$
	\a(\o_2)=\a-\a(\o_1)\leq \a-1.
$$ 
Moreover, in view of \eqref{sum2} we deduce that $\a(\o_2)<1$. By applying Theorem~\ref{thm:ineq} in $\o_2$ and by the latter estimate we obtain 
\begin{align*}
	2\rho & = \int_{\R^2} \bigr( h(x)e^{u_1}+h(x)e^{u_2} \bigr)\,dx > \int_{\o_2} \bigr( h(x)e^{u_1}+h(x)e^{u_2} \bigr)\,dx \\
				& \geq 8\pi(1 - \a(\o_2))\geq 16\pi - 8\pi\a,
\end{align*}
which proves the claim.

\

By using \eqref{cl} together with \eqref{sum} we end up with
$$
	2\rho > 16\pi - 8\pi\a > 2\rho,
$$	
which is the desired contradiction. The proof of $(i)$ is concluded.

\

\textbf{Proof of $(ii)$.} Here we have $N\geq 3$ and $\rho=4\pi\bigr( 2+\sum_j \a_j \bigr)$. Therefore, in view of \eqref{tot},
we see that a necessary condition for the existence of a solution is,
\begin{equation} \label{necess}
	\sum_{j} \a_j>-2.
\end{equation}
\noindent

Therefore, from now on we assume without loss of generality that \eqref{necess} holds true. In view of  \eqref{Dh1}, we find that 
$-\D H=0$ in $\R^2$. Hence, by the definition of $\a(\o_i)=\a(\o_i,h,f)\geq 0$ in \eqref{a}, for the equation \eqref{eq-ui} we have,
\begin{equation} \label{a_i2}
	\a(\o_i)= -\sum_{j\in J_i} \a_j, \quad i=1,2,
\end{equation}
where $J_i=\bigr\{j\in\{1,\dots,N\}\,:\, q_j\in \o_i\bigr\}$. As above we set,
$$
	\a=\a(\o_1)+\a(\o_2).
$$
Observe that, in view of  \eqref{necess}, we have,
\begin{equation} \label{aaa}
	\a=-\sum_{j\in J_1\cup J_2} \a_j \leq -\sum_{j} \a_j <2.
\end{equation}

We claim that
\begin{equation} \label{cl2}
	2\rho > 16\pi + 8\pi\sum_{j} \a_j.
\end{equation}
By arguing as in CASE 2 of $(i)$ above and by using \eqref{aaa} we find that,
$$
	2\rho > 16\pi-8\pi\a  \geq 16\pi+8\pi \sum_{j} \a_j,
$$
whenever $\a(\o_i)\geq 1$ for some $i\in\{1,2\}$, where $\a(\o_i)$ is defined in \eqref{a_i2}. Therefore, we may restrict to the case $\a(\o_i)<1$ for all $i=1,2$. By applying Theorem~\ref{thm:ineq} in $\O_1$ and in $\O_2$ we find that,
\begin{equation} \label{ineq-oi2}
		\int_{\o_i} \bigr( h(x)e^{u_1}+h(x)e^{u_2} \bigr)\,dx \geq 8\pi(1-\a(\o_i)), \quad i=1,2.
\end{equation}
Next, in view of \eqref{tot}, and since $\O_1, \O_2$ are disjoint regions and $\o_i\subseteq\O_i$, $i=1,2,$ by summing up the above inequalities we deduce that,
\begin{align*}
	2\rho & = \int_{\R^2} \bigr( h(x)e^{u_1}+h(x)e^{u_2} \bigr)\,dx \geq \sum_{i=1}^2 \int_{\o_i} \bigr( h(x)e^{u_1}+h(x)e^{u_2} \bigr)\,dx \\
				& \geq 16\pi - 8\pi\bigr(\a(\o_1)+\a(\o_2) \bigr)= 16\pi-8\pi\a \\
				& \geq 16\pi+8\pi \sum_{j} \a_j,
\end{align*}
where we have used \eqref{aaa} in the last step. To prove the claim \eqref{cl2} it is enough to prove one of the above inequalities is strict. The equality in the last step holds if and only if $\a=-\sum_{j} \a_j$, that is, by the definition of $\a(\o_i)$ in \eqref{a_i2}, if and only if 
$$ 
	\{q_1,\dots,q_N\}\subset \o_1\cup \o_2.
$$ 
Since by assumption $N\geq 3$ we conclude that at least one of $\o_1, \o_2$ must contain more than one of the points $q_j$'s. Without loss of generality we assume 
$$ 
	|J_1|>1,
$$	
where $J_i$ is defined after \eqref{a_i2}. We shall prove that in this case the inequality in \eqref{ineq-oi2} is strict for $i=1$. Indeed, in view of Theorem~\ref{thm:ineq}, the equality holds if and only if, in suitable coordinates, $\o_1=B_{\d}(0)$ for some $\d>0$ and $h(x)e^{u_i}\equiv |x|^{-2\a}e^{U_{\l_i,\a}}$ for some $\l_i>0$, $i=1,2$,
where $\a=\frac{1}{4\pi}\mu_+(\o_1)$ and $U_{\l_i,\a}$ is defined in \eqref{U}. Moreover, $\mu_+(\o_1)=-\D H-f=4\pi\a\d_{p=0}$. Therefore, we have $u_i+H(x)\equiv U_{\l_i,\a}-2\a\ln|x|$ and in particular
$$
     \D u_i +\D H(x)=\D U_{\l_i,\a}-4\pi\a\d_{p=0},\mbox{ in } B_\d(0).
$$
By using first \eqref{eq:U}, \eqref{eq-ui} and then $h(x)e^{u_i}\equiv |x|^{-2\a}e^{U_{\l_i,\a}}$ we find that,
$$
        \sum_{j\in J_1}\a_j \d_{q_j}+\Delta H(x)=-4\pi\a\d_{p=0},\mbox{ in } B_\d(0).
$$
Recalling that $-\D H=0$ in $\R^2$ and that $|J_1|>1$, we readily conclude that the latter equality is impossible. 
Therefore, in particular the inequality in \eqref{ineq-oi2} is strict for $i=1$, which proves the claim \eqref{cl2}. 

\

At this point, since $\rho=4\pi\bigr( 2+\sum_j \a_j \bigr)$, then, by using \eqref{cl2}, we deduce that,
$$
	2\rho > 16\pi+8\pi \sum_{j} \a_j =2\rho,
$$
which is the desired contradiction. The proof is completed.
\end{proof}

\

\section{Symmetry for Spherical Onsager vortex equation} \label{sec:onsager}

In this section we provide the proof of the symmetry result for the spherical Onsager vortex equation \eqref{OnsagerVortexPDE} of Theorem \ref{thm:onsager}, see subsection \ref{subsec:onsager}. The argument is based on the singular Sphere Covering Inequality, Theorem \ref{thm:ineq}, jointly with some ideas introduced in \cite{GM1}.

\medskip

\noindent
\begin{proof}[Proof of Theorem \ref{thm:onsager}]
Without loss of generality we may assume that $\vec{n}$ coincides with the north pole, i.e. $\vec{n} =\mathcal N=(0,0,1)$. 
Let $\Pi: \S^2\setminus\{\mathcal N\} \rightarrow \R^2$ be the stereographic projection with respect to the north pole defined in \eqref{Pi} and 
$v$ be a solution of \eqref{OnsagerVortexPDE} with $8\pi<\beta\leq 16\pi$ and $\gamma$ as in \eqref{improvedEstimate}. By setting,
$$
u(x)=v(\Pi^{-1}(x)) \quad \mbox{for }x\in \R^2,
$$
then $u$ satisfies, 
\begin{equation}
\Delta u+\frac{J^2(x)\exp\bigr(\beta u-\gamma \psi (x)\bigr)}{\int_{\R^2}J^2(x)\exp\bigr(\beta u-\gamma \psi (x)\bigr) \,dx }-\frac{J^2(x)}{4\pi}=0 \quad \mbox{in } \R^2, 
\end{equation}
where 
$$
J(x)=\frac{2}{1+|x|^2} \qquad \hbox{and} \qquad \psi(x)=\frac{|x|^2-1}{|x|^2+1}\,.
$$
As in \cite{GM1, Lin1} we define,
$$
w(x)=\beta\left(u(x)-\frac{1}{4 \pi} \ln\left(1+|x|^2\right) \right)-c,
$$
with
$$
	c=\gamma +\ln \left( \frac{2}{\beta}\int_{\R^2} J^2(x)e^{\beta u-\gamma \psi}\,dx\right).
$$
Then, we have,
\begin{equation}\label{OnsagerInR2}
\Delta w+ h(x)e^{w}=0 \quad \mbox{in } \R^2,
\end{equation}
and, 
$$
\int_{\R^2}h(x)e^{w}\,dx=\beta,
$$
where, 
\begin{equation}\label{onsager}
h(x)=h(|x|)=8(1+|x|^2)^{\left(-2+\frac{\beta}{4\pi}\right)}e^{\gamma J(x)},
\end{equation}
which, letting $h=e^H$, satisfies, 
\begin{equation} \label{Dh2}
\Delta H(x) =\frac{4\left(-2+\frac{\beta}{4 \pi}\right)}{(1+|x|^2)^2}+\frac{8 \gamma(|x|^2-1)}{(1+|x|^2)^3}.
\end{equation}
Observe that $-\Delta H \leq 0$ in $\R^2$ if and only if $\gamma \leq \frac{\beta}{8\pi}-1$. For the latter range of $\gamma$ we already know from Theorem E that every solution to \eqref{OnsagerVortexPDE} is axially symmetric with respect to $\vec{n}$. Therefore, we suppose from now on that $\gamma > \frac{\beta}{8\pi}-1$. Then, $-\Delta H > 0$ in $B_{r}(0)$, where
\begin{equation} \label{r}
r^2=\frac{\gamma+1-\frac{\beta}{8\pi}}{\gamma-1+\frac{\beta}{8\pi}}\,.
\end{equation}
Our goal is to show that the solution $v$ to \eqref{OnsagerVortexPDE} is evenly symmetric with respect to a plane passing through the origin and containing 
the vector $\vec{n}$., i.e. that $w$ is evenly symmetric with respect to a line passing through the origin and a point $p\in\R^2$. 
First of all, observe that $\lim_{|x|\to+\infty} w(x)=-\infty$ and hence $w$ has a maximum point which we denote by $p\in\R^2$. 
Without loss of generality we may assume that $p$ lies on $x_1$-axis, and then define $w^*(x_1,x_2)=w(x_1,-x_2)$ and
$$
	\wtilde w=w-w^*.
$$
With these notations we are left to prove that $\wtilde w\equiv 0$ in $\R^2$ and we assume by contradiction this is not the case. Observe that $\wtilde w$ satisfies, 
\begin{equation} \label{w-tilde}
	\D \wtilde w + c(x)\wtilde w= 0 \quad \mbox{in } \R^2, \qquad c(x)=h(x)\frac{e^{w}-e^{w^*}}{w-w^*}\,.
\end{equation}
On the other hand $\wtilde w(x_1,0)=0$ for all $x_1\in\R$. Moreover, $\wtilde w$ has a critical point at $p$ that lies on the $x_1$-axis and 
thus $\wtilde w_{x_2}(p)=0$.\\ We claim that $\wtilde w$ changes sign in $\R^2_+$. 
Indeed, if this were not the case, then we could assume that $\wtilde w<0$ in any $B_R^+(p)=\{x\in B_R(p)\,:\, x_2>0\}$. 
However, the latter fact jointly with $\wtilde w_{x_2}(p)=0$ contradicts the thesis of Hopf's Lemma when applied to the equation
\eqref{w-tilde} at the point $p$ in the domain $B_R^+(p)$.\\ 
As a consequence, we conclude that there exist at least two disjoint simply-connected regions $\O_i \subset\R^2_+$, $i=1,2$ 
(not necessarily bounded) such that there exist a pair of open subsets $\o_i\subseteq\O_i$, $i=1,2,$ such that,
$$	
\left\{ \begin{array}{l}
w> w^* \quad  \mbox{in } \o_1, \qquad  w^*> w \quad  \mbox{in } \o_2, \vspace{0.2cm}\\
w=w^* \quad \mbox{on } \p \o_1\cup\p\o_2.
\end{array}
\right.
$$
By arguing as in the proof of Theorem \ref{thm:mf}, we may assume without loss of generality that $\O_1$ and $\O_2$ are bounded. As for 
the regularity of $\o_1, \o_2$ we refer to Remark~\ref{reg}. By applying Theorem \ref{thm:ineq} in both $\o_i$, $i=1,2$, and summing up we get,
\begin{align*}
\beta &= \int_{\R^2}h(x)e^{w} \,dx = \int_{\R^2_+}\left(h(x)e^{w}+h(x)e^{w^*}\right) \,dx \\
&\geq \sum \limits_{i=1}^{2} \int_{\omega_i} \left(h(x)e^{w}+h(x)e^{w^*}\right) \,dx >8\pi \left( 2- \alpha(\omega_1)-\alpha(\omega_2)\right).
\end{align*}
One can show that the last inequality is strict by using the same argument as in the proof of Theorem~\ref{thm:mf}. Hence,
\begin{equation} \label{condition}
\beta + 8 \pi (\alpha(\omega_1)+\alpha(\omega_2)) > 16 \pi.
\end{equation}
On the other hand, recalling the definition of $\a$ in \eqref{a} (see also Example \ref{ex1}), the expression in \eqref{Dh2} and the discussion right above \eqref{r}, we find that,  
\begin{align*}
 8 \pi (\alpha(\omega_1)+\alpha(\omega_2)) &\leq -2\int_{B_r^+(0)} \left(\frac{4\left(-2+\frac{\beta}{4 \pi}\right)}{(1+|x|^2)^2}+\frac{8 \gamma(|x|^2-1)}{(1+|x|^2)^3}\right) \,dx \\
 & = -\int_{B_r(0)} \left(\frac{4\left(-2+\frac{\beta}{4 \pi}\right)}{(1+|x|^2)^2}+\frac{8 \gamma(|x|^2-1)}{(1+|x|^2)^3}\right) \,dx \\
 & \\
 &= -\left(4\left(-2+\frac{\beta}{4 \pi}\right)+8\gamma\right) \int_{B_r(0)} \frac{dx}{(1+|x|^2)^2}+ 16 \gamma  \int_{B_r(0)} \frac{dx}{(1+|x|^2)^3} \\
 & \\
 &= -8\pi\left(-1+\frac{\beta}{8 \pi}+\gamma\right) \left(1-\frac{1}{1+r^2}\right)+ 8 \pi \gamma \left(1-\frac{1}{(1+r^2)^2}\right)\\
 & \\
 &= 8\pi \left(1-\frac{1}{1+r^2}\right)\left(-\left(-1+\frac{\beta}{8\pi}+\gamma\right) +\gamma\left(1+\frac{1}{1+r^2}\right) \right).
\end{align*}
In view of \eqref{r} we also have,
\begin{align*}
 8 \pi (\alpha(\omega_1)+\alpha(\omega_2)) &=8\pi \left(1-\frac{1}{1+r^2}\right)\left(-\left(-1+\frac{\beta}{8\pi}+\gamma\right) +\gamma\left(1+\frac{1}{1+r^2}\right) \right)\\
 & \\
 &= 8\pi \left( \frac{\gamma+1-\frac{\beta}{8\pi}}{2\gamma}\right) \left(-\left(-1+\frac{\beta}{8\pi}+\gamma\right) +\frac{3\gamma-1+\frac{\beta}{8\pi}}{2} \right)\\
 &= 8\pi \frac{\left(\gamma+1-\frac{\beta}{8\pi}\right)^2}{4\gamma}\,.
\end{align*}
Inserting the latter estimate into \eqref{condition} we end up with,
$$
\frac{\beta}{8\pi} + \frac{(\gamma+1-\frac{\beta}{8\pi})^2}{4\gamma} > 2. 
$$
The above inequality can be rewritten as 
\begin{equation*}
\gamma^2+2\gamma\left(\frac{\beta}{8\pi}-3\right)+\left(\frac{\beta}{8\pi}-1\right)^2> 0,
\end{equation*}
which contradicts the assumption \eqref{improvedEstimate} on $\gamma$. It follows that $w$ is evenly symmetric about the line passing 
through the origin and the point $p\in\R^2$, as claimed.
\end{proof}

\

\end{document}